\long\def\symbolfootnote[#1]#2{\begingroup%
\def\thefootnote{\fnsymbol{footnote}}\footnote[#1]{#2}\endgroup}
\newcommand{\tr}{\ensuremath{{}^T\!\!}}
\newcommand{\tra}{\ensuremath{{}^T}\!}
\newcommand{\C}{\mathfrak C}
\newcommand{\E}{\mathcal E}
\newcommand{\F}{\mathcal F}
\newcommand{\diag}{\textup{diag}}
\newtheorem{theorem}{Theorem}[section]
\newtheorem{lemma}[theorem]{Lemma}
\newtheorem{proposition}[theorem]{Proposition}
\newtheorem{definition}{Definition}[section]
\newtheorem*{theorem*}{Theorem}
\newtheorem*{theorema}{Theorem A}
\newtheorem*{corollarya}{Corollary B}
\newtheorem*{corollaryb}{Corollary C}
\newtheorem{remark}[theorem]{Remark}
\numberwithin{equation}{section}
\newcommand{\ignore}[1]{}
\newcommand{\mynote}[1]{}
\def\Ddots{\mathinner{\mkern1mu\raise\p@
\vbox{\kern7\p@\hbox{.}}\mkern2mu
\raise4\p@\hbox{.}\mkern2mu\raise7\p@\hbox{.}\mkern1mu}}
\begin{document}
\title{Gaussian Elimination in symplectic and orthogonal groups}
\author[Bhunia, Mahalanobis, Shinde and Singh]{Sushil Bhunia, Ayan Mahalanobis, Pralhad Shinde and Anupam Singh} 
\address{IISER Pune, Dr.~Homi Bhabha Road, Pashan, Pune 411008, INDIA.} 
\email{sushilbhunia@gmail.com}
\email{ayan.mahalanobis@gmail.com}
\email{anupamk18@gmail.com}
\email{pralhad.shinde96@gmail.com}
\thanks{This work is supported by a SERB research grant.}
\subjclass[2010]{11E57, 15A21}
\today
\keywords{Symplectic group, orthogonal groups, Gaussian elimination, spinor norm}
\begin{abstract} This paper studies algorithms similar to the Gaussian elimination algorithm in symplectic and orthogonal groups. We discuss two applications of this algorithm. One computes the spinor norm and the other computes the double coset decomposition with respect to Siegel maximal parabolic subgroup.
\end{abstract}
\maketitle
\section{Introduction}
Gaussian elimination is a very old theme in Mathematics. It appeared in print as chapter eight in a Chinese
mathematical text called, ``The nine chapters of the mathematical art''. It is believed, a part of that book was written as early as 150
BCE. For a historical perspective on Gaussian elimination, we refer to a nice work by Grcar~\cite{grc}.   

In this paper, we work with Chevalley generators~\cite[\S11.3]{ca}. Chevalley generators for the special linear group are elementary transvections.  Transvections of the special linear groups have many interesting
applications. Hildebrand~\cite{hildebrand} showed that random walk based on transvections become close to the uniform distribution fast.

Chevalley generators for other classical groups are known for a very long time. However, its use in row-column operations in symplectic and orthogonal groups is new. We develop row-column operations, very similar to the Gaussian elimination algorithm for special linear groups. We call our algorithms Gaussian elimination in symplectic and orthogonal groups respectively. Similar algorithm for unitary groups~\cite{aa} is available.

From our algorithm, one can compute the spinor norm easily, see Section~\ref{spinornorm}. Murray and Roney-Dougal~\cite{mr} studied computing spinor norm earlier. Our algorithm can also be used to compute the double coset decomposition corresponding to the Siegel maximal parabolic subgroup, see Section~\ref{parabolicdecomp}. 

\section{Existing work} We report on some existing work that are relevant to this paper. In computational group theory, Gaussian elimination algorithms are seen as a subprocess of the constructive group recognition project. In this case, a group $G$ is defined by a set of generators $\langle X\rangle=G$, the problem is to write $g\in G$ as a word in $X$. 

Brooksbank~\cite[Section 5]{brooksbank} presents an idea similar to that in our algorithm. His main interest lies in constructive group recognition, writing $g$ as a word in $X$. He is not particularly interested in developing a Gaussian elimination algorithm. In a Gaussian elimination algorithm two important components are: elementary matrices and elementary operations. Brooksbank's elementary matrices are the output of a probabilistic Las Vegas algorithm. So, it is difficult to judge, if he has the same elementary matrices as
ours. He does not define elementary operations. He uses a low dimensional oracle to solve the word problem. Our algorithm is more straightforward and works directly with elementary matrices. It seems that his methods could be modified to produce a Gaussian elimination algorithm in all classical groups in finite fields of all characteristics. However, his treatment depends on the primitive element $\rho$ of $\mathbb{F}_q^\times$ and on expressing $\mathbb{F}_q$ as a finite dimensional vector space over $\mathbb{F}_p$ -- the prime subfield. This suggests that his algorithm would only work for finite fields.

Costi~\cite{costi} develops an algorithm similar to ours using standard generators. These standard generators are defined using a primitive element $\omega$ of the finite field. When one uses the primitive element of the finite field, then to work with an arbitrary field element, one needs to solve the discrete logarithm problem in $\omega$. Discrete logarithm problem in finite fields are hard to solve. There is no known polynomial time algorithm to solve them. This makes programs using the standard generators slower. In the Magma
implementation that we talk about later, for small fields, the field elements are represented by Zech logarithm. So, there is no need to compute the discrete logarithm. However, one needs to compute various
powers of $\omega$ and $\omega^{-1}$ in Costi's algorithm. This is the main reason behind Costi's algorithm to be slower than ours. Furthermore, Costi's algorithms are recursive. Moreover, Costi's algorithm cannot
be extended to infinite fields because of its use of a primitive element.

Cohen, Murray and Taylor~\cite{CMT} proposed a generalized algorithm using the row-column operations, using a representation of Chevalley groups. The key idea there was to bring down an element to a maximal
parabolic subgroup and repeat the process inductively. Here we use the natural matrix representation of these groups. Thus our algorithm is more direct and works with matrices explicitly and effectively. A novelty of our algorithm is that we do not need to assume that the Chevalley generators generate the group under consideration. Thus our algorithm proves independently the fact that these groups are generated by 
elementary matrices.

\section{Main Result}
Algorithms that we develop in this paper work only for a given bilinear form (or quadratic form) $\beta$(see Equations 4.1- 4.5). Though in our algorithm, we work with only one bilinear form (or quadratic form) $\beta$, given by a fixed basis, with a suitable change of basis matrix our algorithm works on any equivalent bilinear forms (or quadratic forms). Our algorithm work well on fields of all characteristics for symplectic and orthogonal groups. 

Another way to look at this paper, we have an algorithmic proof of this well-known theorem.  For definitions of elementary matrices, one can look ahead to Section~\ref{wordproblem}. 
\begin{theorema}\label{thma}
 Let $k$ be a field. For $d\geq 4$ or $l\geq 2$ following holds:
\begin{itemize}
\item[($\mathcal{A}$)] Every element of the orthogonal group $\text{O}(d, k)$ can be written as a product of elementary matrices and a diagonal matrix. Furthermore, the diagonal matrix is of the form 
\begin{eqnarray*}
\diag(1,\ldots,1, \lambda,1,\ldots,1,\lambda^{-1}) & \lambda\in k^\times & \text{for}\; \;O^{+}(2l, k)\\
\diag(\vartheta,1,\ldots,1,\lambda,1,\ldots,1,\lambda^{-1})&\lambda\in k^\times\; \text{and}\;
  \vartheta=\pm 1& \text{for}\; O(2l+1, k)\\
  \diag(1,1,\underbrace{1,\ldots, \lambda}_{(l-1)},\underbrace{1,\ldots, \lambda^{-1}}_{(l-1)}\; ) & \lambda\in k^\times & \text{for}\; O^{-}(2l, k).
\end{eqnarray*}
 \item[($\mathcal{B}$)] Every element of the symplectic group $\text{Sp}(2l, k)$ can be written as a product of elementary matrices.
\end{itemize}
\end{theorema}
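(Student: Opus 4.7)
The argument is constructive: I would develop a symplectic/orthogonal analogue of Gaussian elimination in which every row- and column-operation is realized by multiplication by a Chevalley root-group element $x_\alpha(t)$, so that the running product stays inside the group at every step. The global structure is induction on $l$, with the induction step peeling off one hyperbolic pair $(e_1,e_d)$ at a time. The base case is $l=2$, or one descent step further to $\text{Sp}(2,k)=\text{SL}(2,k)$ and its orthogonal cousin, where ordinary Gaussian elimination (with Weyl-type swaps where needed) suffices.

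For the inductive step, take $g$ in the group. First I multiply $g$ on the left by a product of elementary matrices to reduce the first column to $a e_1$ for some $a\in k^\times$. Because $g$ preserves $\beta$ and $e_1$ is isotropic, $g e_1$ is isotropic, which constrains the entries of the first column. Upper entries are cleared by positive-root elements; if the pivot position $(1,1)$ is zero while a lower entry is nonzero, a Weyl-type word $w_\alpha(t)=x_\alpha(t)\,x_{-\alpha}(-t^{-1})\,x_\alpha(t)$ — itself a product of elementary matrices — rotates a nonzero entry into a clearable position. Multiplying on the right by elementary matrices then clears the first row; form-preservation forces most of the paired row and column to collapse automatically. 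The result is block-diagonal of the form $\diag(a,h,a^{-1})$, or $\diag(\vartheta,a,h,a^{-1})$ in the odd orthogonal case, with $h$ in the corresponding classical group of rank $l-1$ and $\vartheta=\pm 1$.

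By induction, $h$ decomposes as claimed. In the symplectic case, the leftover toral element $\diag(a,I,a^{-1})$ is itself a product of elementary matrices via the identity $h_\alpha(a)=w_\alpha(a)\,w_\alpha(1)^{-1}$, so no diagonal factor is needed, yielding part $(\mathcal{B})$. In the orthogonal case this last step fails because of the spinor-norm (and, in odd dimension, determinant) obstruction, so the diagonal factor must be recorded as in $(\mathcal{A})$. The main obstacle is the first-column reduction: handling every configuration of zeros and nonzeros uniformly, via iterated Weyl-element swaps driven by the antidiagonal pairing of $\beta$, and in the orthogonal setting taking care that the characteristic is not $2$ so the required identities involving $\tfrac{1}{2}$-coefficients in the root-element commutator relations remain valid. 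A secondary subtlety, only in the orthogonal case, is confirming that the surviving diagonal has exactly the restricted shape shown, which amounts to identifying its coset class in the maximal torus modulo the elementary subgroup.
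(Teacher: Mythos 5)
Your argument is correct in outline, but it takes a genuinely different route from the paper's. You run an induction on the Witt index, peeling off one hyperbolic pair per step: reduce the first column to $a e_1$ with root elements and Weyl words, let form-preservation collapse the paired row and column, recurse on the rank-$(l-1)$ block, and absorb the leftover toral factor via $h_\alpha(a)=w_\alpha(a)w_\alpha(1)^{-1}$ in the symplectic case. This is essentially the parabolic-descent strategy of Cohen--Murray--Taylor, which the paper explicitly cites as its point of departure. The paper instead makes a single global pass organized around the Siegel parabolic: the E1 generators realize $\mathrm{GL}(l,k)$ as the Levi block, so ordinary Gaussian elimination diagonalizes the whole block $A$ at once; the structural lemmas (Lemma~\ref{lemmaA}, Corollaries~\ref{corA} and~\ref{corB}, Lemma~\ref{lemmaC}) then show that once $A$ is diagonal the blocks $C$ and $B$ are forced to be (skew-)symmetric relative to $A$ and hence are exactly sums of the E3 and E2 patterns, so they are cleared wholesale; the swaps $w_{i,-i}$ of Lemma~\ref{lemmaD} handle singular $A$, and Lemma~\ref{lemmaE} removes the final torus element in case ($\mathcal{B}$). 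The paper's organization buys the $\mathrm{O}(l^3)$ operation count and the Siegel double-coset decomposition of Section~\ref{parabolicdecomp} essentially for free, and it never accumulates more than one toral factor. Your route is conceptually lighter but leaves two pieces of work that you only gesture at: the uniform case analysis in the first-column reduction (all configurations of zeros among the isotropic coordinates), and, in the orthogonal case, the fact that the product of the toral factors $\diag(a_1,\ldots,a_l,a_1^{-1},\ldots,a_l^{-1})$ collected over the induction is congruent modulo the elementary subgroup to $\diag(1,\ldots,1,\lambda,1,\ldots,1,\lambda^{-1})$ with $\lambda=\prod a_i$; the latter follows from $h_{\epsilon_i-\epsilon_j}(a)$ being a product of E1 generators, and it is precisely where the single-$\lambda$ normal form (hence the spinor-norm corollary) comes from, so it should be made explicit.
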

This theorem has a surprising corollary. It follows that: 
\begin{corollarya}
Let char$(k)$ be odd. In a split orthogonal group $\text{O}^+(d, k)$, the image of $\lambda$ in $k^\times/k^{\times 2}$ is the spinor norm. 
\end{corollarya}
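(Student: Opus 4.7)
The plan is to use Theorem A to factor any $g \in \mathrm{O}(d,k)$ as a product of elementary matrices times a distinguished diagonal matrix $h$, and then evaluate the spinor norm $\theta \colon \mathrm{O}(d,k) \to k^\times/k^{\times 2}$ on each factor. Since $\theta$ is a group homomorphism (defined by writing an orthogonal transformation as a product of reflections and multiplying the self-pairings of the reflecting vectors modulo squares), it suffices to show that (i) every elementary matrix has trivial spinor norm, and (ii) the diagonal $h$ has spinor norm $\lambda$ in $k^\times/k^{\times 2}$.

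For (i), each Chevalley root-subgroup generator $x_\alpha(t)$ acts non-trivially only on a subspace of rank $\le 2$ attached to the root $\alpha$, and on this subspace it can be exhibited as a product of two reflections $r_v r_w$. A direct calculation of the reflecting vectors $v, w$ yields $\beta(v,v)\,\beta(w,w)\in k^{\times 2}$, so $\theta(x_\alpha(t))=1$ in $k^\times/k^{\times 2}$.

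For (ii), restrict to the hyperbolic plane on which $h$ acts non-trivially, with hyperbolic basis $(e,f)$ so that $h$ sends $e \mapsto \lambda e$ and $f \mapsto \lambda^{-1} f$. Taking $v = e - f$ and $v' = \lambda e - f$, one checks that $r_{v'}\, r_v$ agrees with $h$ on this plane and is the identity on its orthogonal complement, so
\[
\theta(h) \;=\; \beta(v,v)\,\beta(v',v') \;=\; \lambda \cdot (\text{square}) \;\equiv\; \lambda \pmod{k^{\times 2}}.
\]
For $d = 2l+1$, the extra sign $\vartheta = \pm 1$ acts either as the identity or as the reflection through the anisotropic middle basis vector $e_0$; since $e_0$ has norm $1$ in the chosen basis for $\beta$, this factor contributes only a square and so does not alter the class of $\lambda$.

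Combining (i) and (ii) via multiplicativity of $\theta$ yields $\theta(g) \equiv \lambda \pmod{k^{\times 2}}$, as claimed. The main obstacle is the uniform verification in (i): one must produce, for every short-root and long-root Chevalley generator appearing in the decomposition of Theorem A, an explicit reflection pair with square discriminant. Because each such decomposition lives inside a small hyperbolic or rank-$2$ block, every case reduces to an elementary bilinear-form computation, but it is the bookkeeping across the different root types that carries the argument.
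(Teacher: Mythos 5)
Your overall strategy (factor $g$ via Theorem A, use multiplicativity of $\Theta$, evaluate it on each factor) is exactly the paper's, and your part (ii) coincides with the paper's Lemma 5.2(3): the paper writes $\diag(1,\ldots,1,\lambda,1,\ldots,1,\lambda^{-1})=\rho_{(e_l+e_{-l})}\rho_{(e_l+\lambda e_{-l})}$ and reads off $\overline{\lambda}$, which is your computation with $v=e-f$, $v'=\lambda e-f$. The problem is part (i). You assert that every elementary generator $x_\alpha(t)$ "can be exhibited as a product of two reflections $r_vr_w$" on its rank-$\le 2$ moved subspace, and you defer the verification as bookkeeping. This is not bookkeeping; for most of the generators the assertion is false. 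Take $g=x_{i,j}(t)=I+t(e_{i,j}-e_{-j,-i})$ with $t\neq 0$, $i\neq j$: its moved space $V_g=\mathrm{im}(I-g)=\langle e_i,e_{-j}\rangle$ is a \emph{totally isotropic} plane, and the same happens for $x_{i,-j}(t)$ and $x_{-i,j}(t)$. If $g=\rho_v\rho_w$ then $\mathrm{im}(I-g)\subseteq\langle v,w\rangle$, so a two-dimensional $V_g$ would have to contain the anisotropic vectors $v$ and $w$ --- impossible when $V_g$ is totally isotropic. By the Cartan--Dieudonn\'e--Scherk refinement such elements require $\dim V_g+2=4$ reflections, so the "direct calculation of the reflecting vectors $v,w$" cannot be carried out as stated. (Your claim does hold for $x_{i,0}(t)$ and $x_{0,i}(t)$, whose moved plane contains the anisotropic $e_0$, and the generator $w_l$, which you omit, is a single reflection with $Q(e_l+e_{-l})=1$; but the E1--E3 types are the bulk of what Theorem A produces.)

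The conclusion of (i) is nevertheless true, and the paper reaches it by a different and cleaner route: all the elementary matrices are unipotent, and a unipotent element has trivial spinor norm (Proposition 5.1(3)). That fact does not require exhibiting reflections at all --- it comes from Wall's form $[\ ,\ ]_g$ on $V_g$, which is nondegenerate even when $\beta|_{V_g}$ is degenerate, so $\Theta(g)=\overline{\mathrm{disc}(V_g,[\ ,\ ]_g)}$ can be evaluated directly (or one can quote that unipotents lie in $\Omega(d,k)=\ker\Theta$). To repair your argument you must either invoke this unipotence fact, or produce for each offending generator an explicit factorization into four reflections and check that the product of the four norms is a square; as written, step (i) has a genuine gap.
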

Furthermore, the spinor norm can also be computed using our algorithm for the twisted orthogonal group in odd characteristics, see Section~\ref{section:twisted_sn}.
\begin{corollaryb}
Let $k$ be a perfect field of characteristics 2. Every element of the orthogonal group $O(2l+1, k)$ can be written as the product of elementary matrices.  
\end{corollaryb}

We have an efficient algorithm to compute the spinor norm. Since the commutator subgroup of the orthogonal group is the kernel of the spinor norm restricted to special orthogonal group, the above corollary is a membership test for the commutator subgroup in the orthogonal group. In other words, an
element $g$ in the special orthogonal group belongs to the commutator subgroup if and only if the $\lambda$ it produces in the Gaussian elimination algorithm is a square in the field, see Equation~\ref{spin}.

The bilinear form that we use and the generators that we define have its roots in the abstract root system of a semisimple Lie algebra and Chevalley groups defined by Chevalley and Steinberg~\cite{st2,ch}. However we assume no knowledge of Lie theory or Chevalley groups in this paper.
  
\section{Orthogonal and Symplectic Groups}\label{des2}
We begin with a brief introduction to orthogonal and symplectic groups. We follow Carter~\cite{ca}, Taylor~\cite{ta} and Grove~\cite{gr} in our introduction. In this section, we \textbf{fix some notations} which will be used throughout this paper. We denote the transpose of a matrix $X$ by $\tr{X}$. As usual, $te_{i,j}$ denote the matrix unit with $t$ in the $(i,j)$ place and $0$ everywhere else.

Let $V$ be a vector space of dimension $d$ over a field $k$. We write the dimension of $V$ as $d$ where $d=2l+1$ or $d=2l$ and $l\geq 1$.  Let $\beta\colon V\times V \rightarrow k$ be
a bilinear form. By fixing a basis of $V$ we can associate a matrix to $\beta$. We shall abuse the notation slightly and denote the matrix of the bilinear form by $\beta$ itself. Thus $\beta(x,y)=\tr x\beta y$ where $x,y$ are column vectors. We will work with non-degenerate bilinear forms, which implies, $\det\beta\neq 0$. A symmetric or skew-symmetric bilinear form $\beta$ satisfies $\beta=\tr\beta$ or $\beta=-\tr\beta$ respectively.  In even order, for characteristic 2, the symmetric and skew-symmetric forms are the same. By fixing a basis for $V$, we identify $\text{GL}(V)$ with $\text{GL}(d, k)$ and treat orthogonal groups and symplectic groups as subgroups of the matrix group $\text{GL}(d, k)$. 

\begin{definition}[Symplectic Group]
A square matrix $X$ of size $d$ is called symplectic if $\tr X\beta X=\beta$ where $\beta$ is skew-symmetric. The set of symplectic matrices form the symplectic group.
\end{definition}
In this paper, we deal with the symplectic group defined by the bilinear form defined by Equation~\ref{beta1}. So any mention of symplectic group means this one particular symplectic group, unless
stated otherwise. 

Up to equivalence, there is an unique non-degenerate skew-symmetric (or alternating) bilinear form over a field $k$. Furthermore a non-degenerate skew-symmetric (or alternating) bilinear form exists only in even dimension. Fix a basis of $V$ as $\{e_1,\ldots, e_l, e_{-1},\ldots, e_{-l}\}$ so that the matrix $\beta$ is: 
\begin{equation}\label{beta1}
\beta=\begin{pmatrix}0&I_l\\ -I_l&0\end{pmatrix}.
\end{equation}
The symplectic group with this $\beta$ is denoted by $\text{Sp}(2l, k)$.

\begin{definition}
Let $Q$ is a quadratic form. The orthogonal group associated with $Q$ is defined as:
\begin{equation*}
\normalfont O(d, Q):=\{X \in GL(V) \,\;|\, \;Q(X(v))=Q(v) \,\, \;\text{for all} \, \,v \in V\}. 
\end{equation*} 
\end{definition}

As the quadratic form is defined in a slightly different way in case of even characteristics we describe the orthogonal groups for odd and even characteristics separately. 
\subsection{Orthogonal groups for char(k)$\neq 2$}  
 Recall that one can easily recover the bilinear form from the quadratic form $Q$ by the formula: 
\begin{equation*}
B(x,y)=\frac{1}{2}\{Q(x+y)-Q(x)-Q(y)\}
\end{equation*}
and it is easy to see that a matrix $X$ satisfies $\tra{X}\beta X = \beta$ if and only if $Q(X(x))=Q(x)$ for all $x\in V$. We work with a particular non-degenerate quadratic form $Q$, however when the characteristics of $k$ is not 2 this corresponds to $\beta$ being non-degenerate. 

Let $k$ be a field of odd characteristic. We work with the following non-degenerate bilinear forms: 
Fix a basis $\{e_0,e_1,\ldots,e_l, e_{-1},\ldots, e_{-l}\}$ for odd dimension and $\{e_1,\ldots, e_l, e_{-1},\ldots, e_{-l}\}$ for even dimension so that the matrix $\beta$ is: 
\begin{equation}\label{beta12}
\beta=\left\{\begin{array}{ll}
\begin{pmatrix}2&0&0\\ 0&0&I_l\\ 0&I_l&0\end{pmatrix} & \text{when}\; d=2l+1\\
\begin{pmatrix}0&I_l\\ I_l&0\end{pmatrix} & \text{when}\; d=2l.
\end{array}\right.
\end{equation}
For the twisted form, we fix a basis $\{e_1, e_{-1}, e_2,\ldots, e_l, e_{-2},\ldots, e_{-l}\}$  so that the matrix $\beta$ is: 
\begin{equation}\label{twisted_beta}
\beta=\begin{pmatrix}
   \beta_{0}&0&0\\
   0&0&I_{l-1}\\
   0&I_{l-1}&0
\end{pmatrix},
\end{equation} where 
$\beta_{0}=\begin{pmatrix}
   1&0\\
   0&\epsilon
\end{pmatrix} $ and $\epsilon$ is a fixed non-square in $k $.\\

Note that if $k(= F_{q} )$ is a finite field of odd characteristic. If $d$ is odd then there is only one orthogonal group up to conjugation \cite[Page 79]{gr} and thus we can fix $\beta$ as above. However, up to conjugation there are two different orthogonal groups \cite[Page 79]{gr} in even dimension $d = 2l$. For orthogonal group with even dimension we fix $\beta$ as above. Thus, we cover all finite orthogonal groups over a field of odd characteristics. 

\subsection{Orthogonal groups for char$(k)=2$}
 Assume that char(k)=2, in this case the quadratic form is defined in a slightly different way. A quadratic  form $Q$ is defined as follows: 
 \begin{equation*}
Q(\lambda x+\mu y)=\lambda^{2}Q(x)+\mu^{2}Q(y)+\lambda \mu B(x, y)
\end{equation*} for all $x,y \in V$, $\lambda, \mu \in k$, and $B$  a symmetric bilinear form on $V$ which is called the associated bilinear form of $Q$.  

 Let $k$ be a field of even characteristics. We work with following non-degenerate quadratic forms \cite[Page 10]{ca}. Fix a basis $\{e_0,e_1,\ldots,e_l, e_{-1},\ldots, e_{-l}\}$ for odd dimension and $\{e_1,\ldots, e_l, e_{-1},\ldots, e_{-l}\}$ for even dimension so that the quadratic forms and their associated bilinear forms are as follows: 
\begin{equation}\label{beta2}
Q(x)=\left\{\begin{array}{ll}
x_{0}^2 +x_{1}x_{-1}+\ldots + x_{l}x_{-l}, & \text{with associated form} \begin{pmatrix}0&0&0\\ 0&0&I_l\\ 0&I_l&0\end{pmatrix}, \quad \text{when}\; d=2l+1\\
x_{1}x_{-1}+\ldots + x_{l}x_{-l},& \text{with associated form} \begin{pmatrix}0&I_l\\ I_l&0\end{pmatrix}, \quad \text{when}\; d=2l.
\end{array}\right.
\end{equation}
For the twisted form, we fix a basis $\{e_1, e_{-1}, e_2,\ldots, e_l, e_{-2},\ldots, e_{-l}\}$  so that the quadratic form and its associated bilinear form is as follows: 
\begin{equation}\label{beta3}
Q(x)=\alpha(x_{1}^{2}+x_{-1}^{2}) +x_{1}x_{-1}+\ldots +x_{l}x_{-l},\quad \text{with the associated form} 
 \begin{pmatrix}
   \beta_{0}&0&0\\
   0&0&I_{l-1}\\
   0&I_{l-1}&0
\end{pmatrix},
\end{equation}
 where $\alpha t^{2}+ t+ \alpha$ is irreducible in $k[x]$ and $\beta_{0}=\begin{pmatrix}
   0&1\\
   1&0
\end{pmatrix} $. 

 Note that if $k$ is a perfect field of even characteristics. It is well known that if $\text{dim}(V)=2l+1$ then there is only one non-degenerate quadratic form upto equivalence. However, in case of $\text{dim}(V)=2l$ there are two quadratic forms upto equivalence. Thus, we cover all orthogonal groups over a perfect field of even characteristics. 

Let $Q$ be a non-degenerate quadratic form defined as above. For a fixed basis, we note that any isometry  $g$ satisfying $Q(g(v))=Q(v)$ for all $v \in V$ also satisfies $\tra{g}\beta g=\beta$. However the converse is not true. We denote othogonal groups associated with $Q$ by $O(2l+1,k)$, $O^{+}(2l, k)$ and $O^{-}(2l, k)$ respectively.  
  
Let $k$ be a field of odd characteristics. We denote by $\Omega(d,k)$ the commutator subgroup of the orthogonal group $\text{O}(d,k)$ which is equal to the commutator subgroup of $\text{SO}(d,k)$. 
There is a well known exact sequence 
\begin{equation}\label{spin}
1\longrightarrow \Omega(d,k)\longrightarrow
\text{SO}(d,k)\overset{\Theta}{\longrightarrow}
k^{\times}/{k^{\times}}^2\longrightarrow 1
\end{equation}
where $\Theta$ is the spinor norm. The spinor norm is defined as
$\Theta(g)=\underset{i=1}{\overset{m}{\prod}} Q(v_i)$ where
$g=\rho_{v_1}\cdots\rho_{v_m}$ is written as a product of reflections.

\section{Solving the word problem in $G$}\label{wordproblem}
In computational group theory, one is always looking for algorithms that solve the word problem. When $G$ is a special linear group, one has a well-known algorithm to solve the word problem -- the Gaussian elimination. One observes that the effect of multiplying an element of the special linear group by an elementary matrix (also known as elementary transvection) from left or right is either a row or a column operation respectively. Using this algorithm one can start with any matrix $g\in\text{SL}(l+1, k)$ and get to the identity matrix thus writing $g$ as a product of elementary matrices~\cite[Proposition 6.2]{ab}. One of the \textbf{objective of this paper} is to discuss a similar algorithm for orthogonal and symplectic groups, with a set of generators that we will call \textbf{elementary matrices in their 
respective groups}.  

We first describe the elementary matrices and the row-column operations for the respective groups. These row-column operations are nothing but multiplication by elementary matrices from left and right respectively. Here elementary matrices used are nothing but Chevalley generators which follows from the theory of Chevalley groups.

The basic idea of the algorithm is to use the fact that multiplying any orthogonal matrix by any one of the generators enables us to perform row or column operations.  
The relation $\tra{g}\beta g=\beta$ gives us some compact relations among the blocks of $g$ which can be used to make the algorithm more faster. To make the algorithm simple we will write the algorithm for $O(2l+1, k)$, $O^{+}(2l, k)$ and $O^{-}(2l, k)$ separately.

\subsection{Groups in which Gaussian elimination works}
\begin{itemize}
\item Symplectic groups: Since all non-degenerate skew-symmetric bilinear forms are equivalent~\cite[Corollary 2.12]{gr}, we have a Gaussian elimination algorithm for all symplectic groups over an arbitrary field.
\item Orthogonal groups:
\begin{itemize}
\item[--] Since non-degenerate symmetric bilinear forms over a finite field of odd characteristics are classified~\cite[Page 79]{gr} according to the $\beta$( see Equations~\ref{beta12} \&~\ref{twisted_beta}), we have a Gaussian elimination algorithm for all orthogonal groups over a finite field of odd characteristics. 
\item[--]  Since non-degenerate quadratic forms over a perfect field of even characteristics can be classified~\cite[Page 10]{ca} according to quadratic forms $Q(x)$ defined in Equations~\ref{beta2} \&~\ref{beta3}, we have a Gaussian elimination algorithm for all orthogonal groups over a perfect field of even characteristics.
\item [--] Furthermore, we have Gaussian elimination algorithm for orthogonal groups that are given by the above bilinear forms or quadratic forms over arbitrary fields. This algorithm also works for bilinear or quadratic forms that are equivalent to the above forms. 
\end{itemize}
\end{itemize}

\subsection{Gaussian elimination for matrices of even size -- orthogonal group $O^{+}(d, k)$ and symplectic groups}
Recall that the bilinear forms $\beta$ are the following: 
\begin{itemize}
\item For symplectic group, Sp$(d, k)$, $d=2l$ and  $\beta=\begin{pmatrix}0&I_l\\ -I_l&0\end{pmatrix}$.
\item For orthogonal group, $O^{+}(d, k)$, $d=2l$ and  $\beta=\begin{pmatrix}0&I_l\\ I_l&0\end{pmatrix}$.
\end{itemize}
Note that any isometry $g$ satisfying the quadratic form $Q$ also  satisfy $\tra{g}\beta g=\beta $. The main reason our algorithm works is the following:
Recall that a matrix $g=\begin{pmatrix}A&B\\C&D\end{pmatrix}$, where $A,B,C$ \& $D$ are matrices of size $l$, is orthogonal or symplectic if $\tr g\beta g=\beta$ for the respective $\beta$. After some usual calculations, for orthogonal group it becomes
\begin{equation}\label{imp1}
\begin{pmatrix}\tr CA+\tr AC & \tr CB+\tr AD\\
               \tr DA+\tr BC & \tr DB+\tr BD
\end{pmatrix}
=
\begin{pmatrix}
0&I_l\\
I_l&0
\end{pmatrix}
\end{equation}
The above equation implies among other things, $\tr CA+\tr AC=0$. This implies that $\tr AC$ is skew-symmetric. In an almost identical way one can
 show, if $g$ is symplectic, $\tr AC$ is symmetric. The working principle of our algorithm is simple -- use the symmetry of $\tr AC$. The problem is, for arbitrary $A$ and $C$, it is not easy to use this symmetry. In our case we were able to reduce $A$ to a diagonal matrix and then it is relatively straightforward to use this symmetry. We will explain the algorithm in details later. First of all, let us describe the elementary matrices and the row-column operations for orthogonal and symplectic groups. The genesis of these elementary matrices lie in the Chevalley basis of simple Lie algebras. We won't go into details of Chevalley's theory in this paper. Furthermore, we don't need to, the algorithm that we produce will show that these elementary matrices are 
generators for the respective groups.

Next we present the elementary matrices for the respective groups and then the row-column operations in a tabular form.

\subsubsection{Elementary matrices (Chevalley generators) for orthogonal group $O^{+}(d, k)$ of even size}

Following the theory of root system in a simple Lie algebra, we index rows by $1,2,\ldots,l,-1,-2,\ldots,-l$. For $t\in k$, the elementary matrices are defined as follows:
\begin{table}[ht]
\caption{Elementary matrices for $O^{+}(2l, k)$} 
\centering 
\begin{tabular}{| c | c | c c |} 
\hline 
&&&\\
  char$(k)$ & & Elementary matrices& \\ [0.5ex]
 \hline 
 &  $x_{i,j}(t)$&$I+t(e_{i,j}- e_{-j,-i})$, & $i\neq j$\\
both  & $x_{i,-j}(t)$&$I+t(e_{i,-j}- e_{j,-i})$,& $i<j$\\
  & $x_{-i,j}(t)$&$I+t(e_{-i,j}-e_{-j,i})$, & $i<j$\\
  & $w_{i}$&$I-e_{i,i}-e_{-i,-i}+e_{i,-i}+e_{-i,i}$& $1\leq i \leq l$ \\
  \hline
\end{tabular}
\label{table:nonlin} 
\end{table}  

Let us note the effect of multiplying $g$ by elementary matrices. We write $g \in O^{+}(2l, k)$ as 
$g=\begin{pmatrix} A&B\\C&D\end{pmatrix}$, where $A,B,C$ \& $D$ are $l \times l$ matrices.
\begin{table}[ht]
\caption{The row-column operations for $O^{+}(2l, k)$} \label{Table 1} \vskip 0.2 cm
\centering 
\begin{tabular}{|c c||c c |} 
\hline 
&&&\\
& Row operations & &Column operations  \\ [0.5ex]
 \hline 
 ER1 & $i^{\text{th}} \mapsto i^{\text{th}}+t j^{\text{th}}$ row and & EC1 & $j^{\text{th}} \mapsto j^{\text{th}}+t i^{\text{th}}$ column and \\
& $-j^{\text{th}} \mapsto -j^{\text{th}}-t (-i)^{\text{th}}$  row & &$-i^{\text{th}} \mapsto -i^{\text{th}}-t(-j)^{\text{th}}$  column\\
 \hline
ER2 & $i^{\text{th}} \mapsto i^{\text{th}}+t (-j)^{\text{th}}$ row and & EC2 & $-i^{\text{th}} \mapsto -i^{\text{th}}-tj^{\text{th}}$ column and \\
&$j^{\text{th}} \mapsto j^{\text{th}}-t (-i)^{\text{th}}$  row & &$-j^{\text{th}} \mapsto -j^{\text{th}}+ti^{\text{th}}$  column\\
   \hline
ER3 &$-i^{\text{th}} \mapsto -i^{\text{th}}-tj^{\text{th}}$ row and & EC3 & $j^{\text{th}} \mapsto j^{\text{th}}+t(-i)^{\text{th}}$ column and \\
 &$-j^{\text{th}} \mapsto -j^{\text{th}}+ti^{\text{th}}$  row & & $i^{\text{th}} \mapsto i^{\text{th}}-t(-j)^{\text{th}}$  column\\
 \hline
 $ w_{i}$ & Interchange $i^{th}$ and $(-i)^{th}$ row && Interchange $i^{th}$ and $(-i)^{th}$ column \\
 \hline
 \end{tabular}
\end{table}
  
\subsubsection{Elementary matrices (Chevalley generators) for symplectic groups}
For $t\in k$, the elementary matrices are defined as follows:

\begin{table}[ht]
\caption{Elementary matrices for $Sp(2l, k)$} 
\centering 
\begin{tabular}{| c | c | c c |} 
\hline 
&&&\\
  char$(k)$ & & Elementary matrices& \\ [0.5ex]
 \hline 
 &  $x_{i,j}(t)$&$I+t(e_{i,j}- e_{-j,-i})$, & $i\neq j$\\
both  & $x_{i,-j}(t)$&$I+t(e_{i,-j}- e_{j,-i})$,& $i<j$\\
  & $x_{-i,j}(t)$&$I+t(e_{-i,j}-e_{-j,i})$, & $i<j$\\
  & $x_{i,-i}(t)$&$I+te_{i,-i}$,& $1\leq i\leq l$ \\
  & $x_{-i,i}(t)$&$I+te_{-i,i}$,& $1\leq i\leq l$ \\
  \hline
\end{tabular}
\label{table:nonlin} 
\end{table}  

Let us note the effect of multiplying $g$ by elementary matrices. We write $g \in Sp(2l, k)$ as 
$g=\begin{pmatrix} A&B\\C&D\end{pmatrix}$, where $A,B,C$ \& $D$ are $l \times l$ matrices.

\begin{table}[h!]
\begin{center}
\caption{The row-column operations for symplectic groups}
\label{tab:symp}
\begin{tabular}{|cc|cc|}
\hline
&&&\\
&Row operations && Column operations\\
\hline
ER1&$i\textsuperscript{th}\mapsto i\textsuperscript{th} + tj\textsuperscript{th}$ row and 
 &EC1& $j\textsuperscript{th}\mapsto j\textsuperscript{th} + ti\textsuperscript{th}$ column and\\ &$-j\textsuperscript{th}\mapsto -j\textsuperscript{th} +t(-i)\textsuperscript{th}$ row && $-i\textsuperscript{th}\mapsto -i\textsuperscript{th} + t(-j)\textsuperscript{th}$ column \\
\hline
ER2&$i\textsuperscript{th}\mapsto i\textsuperscript{th} + t(-j)\textsuperscript{th}$ row and 
&EC2 & $-i\textsuperscript{th}\mapsto -i\textsuperscript{th} + tj\textsuperscript{th}$ column and\\ & $j\textsuperscript{th}\mapsto j\textsuperscript{th} +t(-i)\textsuperscript{th}$ row && $-j\textsuperscript{th}\mapsto -j\textsuperscript{th} + ti\textsuperscript{th}$ column\\
\hline
ER3&$-i\textsuperscript{th}\mapsto -i\textsuperscript{th} + tj\textsuperscript{th}$ row and 
 &EC3& $j\textsuperscript{th}\mapsto j\textsuperscript{th} + t(-i)\textsuperscript{th}$ column and\\ & $-j\textsuperscript{th}\mapsto -j\textsuperscript{th} +ti\textsuperscript{th}$ row && $i\textsuperscript{th}\mapsto i\textsuperscript{th} + t(-j)\textsuperscript{th}$ column\\
\hline
ER1a&$i\textsuperscript{th}\mapsto i\textsuperscript{th} + t(-i)\textsuperscript{th}$ row &EC1a& $-i\textsuperscript{th}\mapsto -i\textsuperscript{th}+ti\textsuperscript{th}$ column\\
\hline
ER2a&$-i\textsuperscript{th}\mapsto -i\textsuperscript{th} + ti\textsuperscript{th}$ row &EC2a& $i\textsuperscript{th}\mapsto i\textsuperscript{th}+t(-i)\textsuperscript{th}$ column\\
\hline
$w_i$&Interchange $i\textsuperscript{th}$ and $(-i)\textsuperscript{th}$ rows && Interchange $i\textsuperscript{th}$ and $(-i)\textsuperscript{th}$ columns\\ &with a sign change in the $i\textsuperscript{th}$ row. && with a sign change in the $i\textsuperscript{th}$ column.\\
\bottomrule
\end{tabular}
\end{center}
\end{table}

\subsubsection{Gaussian elimination for Sp$(2l, k)$ and $O^{+}(2l, k)$.}\label{algo:1}
\begin{description}
\item[Step 1] Use ER1 and EC1 to make $A$ into a diagonal matrix. This makes $A$ into a diagonal matrix and changes other matrices $A$, $B$, $C$ and $D$. For sake of notational convenience we keep calling these changed matrix as $A$, $B$, $C$ and $D$ as well.
\item[Step 2] There are two possibilities. One, the diagonal matrix $A$ is of full rank and two, the diagonal matrix $A$ is of rank $\mathfrak{r}$ less than $l$. This is clearly identifiable by looking for zeros in the diagonal of $A$. 
\item[Step 3] Make $\mathfrak{r}$ rows of $C$, corresponding to the non-zero entries in the diagonal of $A$ zero by using ER3. If $\mathfrak{r}=l$, we have $C$ a zero matrix. If not let us assume that $i\textsuperscript{th}$ row is zero in $A$. Then we interchange the $i\textsuperscript{th}$ row with the $-i\textsuperscript{th}$ row in $g$. We do this for all zero rows in $A$.
 The new $C$ is a zero matrix. We claim that the new $A$ must have a full rank. This follows from Equation~\ref{imp1}; in particular $\tr CB+\tr AD=I_l$. If $C$ is zero matrix then $A$ is invertible. Now make $A$ a diagonal matrix by using Step 1. Then one can make $A$ a matrix of the form 
$\begin{pmatrix}
1 &0&\cdots&0\\
0&1&\cdots&0\\
\vdots&\vdots&\ddots&\vdots\\
0&0&\cdots&\lambda
\end{pmatrix}$
 where $\lambda$ is a non-zero scalar using ER1~\cite[Proposition 6.2]{ab}.
Once $A$ is diagonal and $C$ a zero matrix the equation $\tr CB+\tr AD=I_l$ makes $D$ a diagonal matrix of full rank.
\item[Step 4] Use ER2 to make $B$ a zero matrix. The matrix $g$ becomes a diagonal matrix of the form 
 $\begin{pmatrix}
A &0\\
0&A^{-1}\\
\end{pmatrix}$ where $A$ is of the form $\begin{pmatrix}
1 &0&\cdots&0\\
0&1&\cdots&0\\
\vdots&\vdots&\ddots&\vdots\\
0&0&\cdots&\lambda
\end{pmatrix}$.
\item[Step 5] (only for symplectic groups) Reduce the $\lambda$ to $1$ using Lemma~\ref{lemmap}.
\end{description}
\begin{lemma}\label{lemmap}
For $\textrm{Sp}(2l,k)$, the element $\diag(1,\ldots,1,\lambda,1,\ldots,1,\lambda^{-1})$ is a product of elementary matrices.
\end{lemma}
\begin{proof}
Observe that $(I+te_{l,-l})(I-t^{-1}e_{-l,l})(I+te_{l,-l}) = I-e_{l,l}-e_{-l,-l}+te_{l,-l}-t^{-1}e_{-l,l}$ and denote it by $w_l(\lambda)$ and then the diagonal element is $w_l(\lambda)w_l(-1)$.
\end{proof}
\begin{remark}
As we saw in the above algorithm, we will have to interchange $i\textsuperscript{th}$ and $-i\textsuperscript{th}$ rows for $i=1,2,\ldots,l$. This
 can be done by premultiplying with a suitable matrix.
\end{remark}

Let $I$ be the $2l\times 2l$ identity matrix over $k$. To swap $i\textsuperscript{th}$ and $-i\textsuperscript{th}$ row in O$^{+}(2l, k)$, 
swap $i\textsuperscript{th}$ and $-i\textsuperscript{th}$ rows in the matrix $I$. We will call this matrix $w_i$. It is easy to see that this 
matrix $w_i$ is in O$^{+}(2l, k)$ and is of determinant $-1$. Premultiplying with $w_i$ does the row interchange we are looking for.

In the case of symplectic group Sp$(2l,k)$, we again swap two rows $i^\textsuperscript{th}$ and $-i\textsuperscript{th}$ rows in $I$. However we do a sign change in the $i\textsuperscript{th}$ row and call it $w_i$. Simple computation with our chosen $\beta$ shows that the above matrices are in O$^{+}(2l, k)$ and Sp$(2l, k)$ respectively.

However there is one difference between orthogonal and symplectic groups. In symplectic group, $w_i$ can be generated by elementary matrices because  $w_{i}= x_{i,-i}(1)x_{-i, i}(-1)x_{i,-i}(1)$. In the case of orthogonal groups that is not the case. This is clear, the elementary matrices come from the Chevalley generators and those generates $\Omega$ the commutator of the orthogonal group. All matrices in $\Omega$ have determinant $1$. However $w_i$ has determinant $-1$. So we must add $w_i$ as a elementary matrix for O$^{+}(2l, k)$.   
\begin{remark}
This algorithm proves, every element in the symplectic group is of determinant 1. Note, the elementary matrices for the symplectic group is of determinant 1, and we have an algorithm to write any element as product of elementary matrices. So this proves that the determinant is 1.
\end{remark}
\begin{remark}
This algorithm proves, if $X$ is an element of an symplectic group then so is $\tr X$. The argument is similar to above, here we note that the transpose of an elementary matrix in symplectic groups is an elementary matrix.
\end{remark}

\subsection{Gaussian elimination for matrices of odd size -- the odd-orthogonal group} In this case, matrices are of odd size and there is only one family of group to consider, it is the odd orthogonal group $O(2l+1,k)$. This group will be referred to as the odd-orthogonal group.
\subsection{Elementary Matrices (Chevalley generators) for $O(2l+1,k)$}
Following the theory of Lie algebra, we index rows by $0, 1, \ldots,l,-1,\ldots,-l$. These elementary matrices are listed in Table 5. 

\begin{table}[ht]
\caption{Elementary matrices for $O(2l+1,k)$} 
\centering 
\begin{tabular}{| c | c | c c |} 
\hline 
&&&\\
  char(k) & & Elementary matrices& \\ [0.5ex]
 \hline 
 &  $x_{i, j}(t)$&$I+t(e_{i, j}- e_{-j,-i})$, &  $i\neq j$\\
both  & $x_{i,-j}(t)$&$I+t(e_{i,-j}- e_{j,-i})$,&  $i<j$\\
  & $x_{-i, j}(t)$&$I+t(e_{-i, j}-e_{-j, i})$, &  $i<j$\\
  \hline
 & $ x_{i,0}(t)$&$I+t(2e_{i,0}-e_{0, -i})-t^{2}e_{i,-i}$&  $1\leq i\leq l$\\
odd& $x_{0,i}(t)$&$I+t(-2e_{-i,0}+e_{0, i})-t^{2}e_{-i,i}$& $1\leq i\leq l$ \\
\hline
& $x_{i,0}(t)$&$I+te_{0, -i}+t^{2}e_{i,-i}$& $1\leq i\leq l$\\
even&$x_{0, i}(t)$&$I+te_{0, i}+t^{2}e_{-i, i}$& $1\leq i\leq l$\\
\hline
\end{tabular}
\label{table:elementary_odd} 
\end{table}
Elementary matrices for the odd-orthogonal group in even characteristics differs from that of odd characteristics. In above table we made that distinction and listed them separately in different rows according to the characteristics of $k$.  If char$(k)$ even, we can construct the elements $w_{i}$, which interchanges the $i^{th}$ row with $-i^{th}$ row as follows
\begin{equation*}
 w_{i}=(I+e_{0,i}+e_{-i, i})(I+e_{0,-i}+e_{i,-i})(I+e_{0,i}+e_{-i, i})
 =I+e_{i, i}+e_{-i, -i}+e_{i,- i}+e_{-i, i}.
\end{equation*} 
Otherwise, we can construct $w_{i}$, which interchanges the $i^{th}$ row with $-i^{th}$ with a sign change in $i^{th}, -i^{th} \;\text{and} \; 0^{th}$ row in odd-orthogonal group as follows:
\begin{equation*}
w_{i}=x_{0,i}(-1)x_{i,0}(1)x_{0,i}(-1)=I-2e_{0,0}-e_{i, i}-e_{-i,-i}-e_{i,-i}-e_{-i, i}. 
\end{equation*}
The Gaussian elimination algorithm for  $O(2l+1,k)$ follows the earlier algorithm for symplectic and even-orthogonal group closely, except that we need to take care of the zero row and the zero column. We write an element $g\in O(2l+1,k)$ as $g=\begin{pmatrix}\alpha&X&Y\\ E& A&B\\F&C&D\end{pmatrix}$ where $A,B,C$ \& $D$ are $l\times l$ matrices, $X$ and $Y$ are $1\times l$ matrices, $E$ and $F$ are $l\times 1$ matrices, $\alpha\in k$ and $\beta=\begin{pmatrix}2&0&0\\ 0&0&I_l\\0&I_l&0\end{pmatrix}$. Then from the condition $\tr g\beta g=\beta$ we get the following equations.
\begin{eqnarray}
2\tr XX+\tr AC+\tr CA=0\\
2\alpha\tr X+\tr AF+\tr CE=0\\
2\alpha Y+\tr ED+\tr FB=0\\
2\tr XY+\tr AD+\tr CB=I_l
\end{eqnarray}
Let us note the effect of multiplying $g$ by elementary matrices. 
\begin{table}[ht]
\caption{The row-column operations for $O(2l+1,k)$} \label{Table2} 
\centering 
\begin{tabular}{|c c ||c c|} 
\hline 
& & &\\
& Row operations & &Column operations  \\ [0.5ex]
 \hline 
 ER1 & $i^{\text{th}} \mapsto i^{\text{th}}+t j^{\text{th}}$ row and & EC1 & $j^{\text{th}} \mapsto j^{\text{th}}+t i^{\text{th}}$ column and \\
{\tiny(both)}& $-j^{\text{th}} \mapsto -j^{\text{th}}-t (-i)^{\text{th}}$  row & {\tiny(both)} &$-i^{\text{th}} \mapsto -i^{\text{th}}-t(-j)^{\text{th}}$  column\\
 \hline
ER2 & $i^{\text{th}} \mapsto i^{\text{th}}+t (-j)^{\text{th}}$ row and & EC2 & $-i^{\text{th}} \mapsto -i^{\text{th}}-tj^{\text{th}}$ column and \\
{\tiny(both)} &$j^{\text{th}} \mapsto j^{\text{th}}-t (-i)^{\text{th}}$  row & {\tiny(both)}&$-j^{\text{th}} \mapsto -j^{\text{th}}+ti^{\text{th}}$  column\\
   \hline
ER3 &$-i^{\text{th}} \mapsto -i^{\text{th}}-tj^{\text{th}}$ row and & EC3 & $j^{\text{th}} \mapsto j^{\text{th}}+t(-i)^{\text{th}}$ column and \\
{\tiny(both)} &$-j^{\text{th}} \mapsto -j^{\text{th}}+ti^{\text{th}}$  row &{\tiny(both)}& $i^{\text{th}} \mapsto i^{\text{th}}-t(-j)^{\text{th}}$  column\\
  \hline
  ER4 & $0^{\text{th}} \mapsto 0^{\text{th}}-t(-i)^{\text{th}}$ row and & EC4 & $0^{\text{th}} \mapsto 0^{\text{th}}+2ti^{\text{th}}$ column and \\
{\tiny(odd)} &$i^{\text{th}} \mapsto i^{\text{th}}+2t0^{\text{th}}-t^{2}(-i)^{\text{th}}$  row &{\tiny(odd)}& $(-i)^{\text{th}} \mapsto (-i)^{\text{th}}-t0^{\text{th}}-t^{2}i^{\text{th}}$  column\\
 \hline
 ER5 &$0^{\text{th}} \mapsto 0^{\text{th}}+ti^{\text{th}}$ row and & EC5 & $0^{\text{th}} \mapsto 0^{\text{th}}-2t(-i)^{\text{th}}$ column and \\
{\tiny(odd)}& $(-i)^{\text{th}} \mapsto (-i)^{\text{th}}-2t0^{\text{th}}-t^{2}i^{\text{th}}$  row &{\tiny(odd)} &$i^{\text{th}} \mapsto i^{\text{th}}+t0^{\text{th}}-t^{2}(-i)^{\text{th}}$  column\\
 \hline
  ER6 & $0^{\text{th}} \mapsto 0^{\text{th}}+t(-i)^{\text{th}}$ row and & EC6 & $(-i)^{\text{th}} \mapsto (-i)^{\text{th}}+t0^{\text{th}}+t^{2}i^{\text{th}}$  column  \\
{\tiny(even)} &$i^{\text{th}} \mapsto 
i^{\text{th}}+t^{2}(-i)^{\text{th}}$  row &{\tiny(even)}&\\
 \hline
  ER7 & $0^{\text{th}} \mapsto 0^{\text{th}}+ti^{\text{th}}$ row and & EC7 & $i^{\text{th}} \mapsto i^{\text{th}}+t0^{\text{th}}+t^{2}(-i)^{\text{th}}$  column  \\
{\tiny(even)} &$(-i)^{\text{th}} \mapsto 
(-i)^{\text{th}}+t^{2}i^{\text{th}}$  row &{\tiny(even)}&\\
 \hline
 $ w_{i}$ & Interchange $i^{th}$ and $(-i)^{th}$ rows &$w_{i}$& Interchange $i^{th}$ and $(-i)^{th}$ column \\
 {\tiny(odd)} &with a sign change in $i^{th}, -i^{th} \;\text{and}\; 0^{th}$ rows&{\tiny(odd)} &with a sign change in $i^{th}, -i^{th} \;\text{and}\; 0^{th}$ columns\\
 \hline
 $ w_{i}$ & &$w_{i}$&  \\
 {\tiny(even)}&Interchange $i^{th}$ and $(-i)^{th}$ row & {\tiny(even)}&Interchange $i^{th}$ and $(-i)^{th}$ column\\
 \hline
 \end{tabular}
\end{table} 

\subsection{Gaussian elimination for $O(2l+1,k)$}
\begin{description}
\item [Step 1] Use ER1 and EC1 to make $A$ into a diagonal matrix but in the process it changes other matrices $A, B, C, D, E, F, X$, and $Y$. For the sake of notational convenience, we keep calling these changed matrices as $A, B, C, D, E, F, X$, and $Y$ as well. 
\item [Step 2] Now there will be two cases depending on the rank $\mathfrak{r}$ of matrix $A$. The rank of $A$ can be easily determined  using the number of non-zero diagonal entries. Use ER3 and non-zero diagonal entries of $A$ to make corresponding $\mathfrak{r}$ rows of $C$ zero.   
\begin{enumerate}
\item If $\mathfrak{r}=l$ then $C$ becomes zero matrix. 
\item If $\mathfrak{r}< l$ then interchange all zero rows of $A$ with corresponding rows of $C$ using $w_{i}$ so that the new $C$ becomes a zero matrix. 
\end{enumerate} 
Once $C$ becomes zero, note that Relation 5.2 if char$(k)$ is odd or Relation $Q(g(v))=Q(v)$ if char$(k)$ is even guarantees that $X$ becomes zero. Relation 5.5 guarantees that $A$ has full rank $l$ which also makes $D$ a diagonal with full rank $l$. Thus Relation 5.3 shows that $F$ becomes zero as well. Then use Step 1 to reduce $A=\begin{pmatrix}
1 &0&\cdots&0\\
0&1&\cdots&0\\
\vdots&\vdots&\ddots&\vdots\\
0&0&\cdots&\lambda
\end{pmatrix}$.
\item [Step 3] Now if char$(k)$ is even then Relation 5.4 guarantees that $E$ becomes zero as well.
 If char$(k)$ is odd then use ER4 to make $E$ a zero matrix.  
\item [Step 4] Use ER2 to make $B$ a zero matrix. For char$(k)$ even the relation $Q(g(v))=Q(v)$ guarantees that $Y$ is a zero matrix and for char$(k)$ odd Relation 5.4 implies that $Y$ becomes zero.
\end{description}
Thus the matrix $g$ reduces to 
$\begin{pmatrix}\pm 1&0&0\\0&A&0\\0&0&D\end{pmatrix}$, where $A=\text{diag}(1,\cdots,1,\lambda) $ and $D=\text{diag}(1,\cdots,1,\lambda^{-1})$.

\begin{proof}[Proof of Corollary C]  Let $k$ be a perfect filed of characteristics 2. Note that we can write the diagonal matrix $\rm{diag}(1,\cdots, 1,\lambda, 1,\cdots, 1,\lambda^{-1} )$ as a product of elementary matrices as follows: 
\begin{align*}
\rm{diag}(1,\cdots, 1,\lambda, 1,\cdots, 1,\lambda^{-1} )&=x_{l,-l}(t)x_{-l,l}(-t^{-1})x_{l,-l}(t), \; \rm{where}\; t^2=\lambda.
\end{align*} and hence we can reduce the matrix $g$ to identity.  
 \end{proof}

\subsection{Elementary matrices (Chevalley generators) for twisted orthogonal groups $O^{-}(2l, k)$} 
In this section, we describe row-column operations for twisted Chevalley groups. These groups are also known as the Steinberg groups. An element $g\in O^{-}(2l,k)$ is denoted $g=\begin{pmatrix}A_{0}&X&Y\\E&A&B\\F&C&D\end{pmatrix}$, where $A,B,C$ \& $D$ are $(l-1) \times (l-1)$ matrices, $X$ and $Y$ are $2 \times (l-1)$ matrices, $E$ and $F$ are $(l-1) \times 2$ matrices and $A_{0}$ is a $2\times 2$ matrix. In the Gaussian elimination algorithm that we discuss, we reduce $X,Y,E \& F$ to zero and $A$ and $D$ to diagonal matrices. However, unlike the previous cases we were unable to reduce $A_0$ to an identity matrix. However, for odd characteristics we were able to reduce $A_0$ to a two-parameter subgroup. Furthermore, our algorithm provides for the spinor norm as before.  

Let us go ahead and talk about the output of the algorithm. In the output we will have a $2\times 2$ block (also called $A_0$) which will satisfy $\tr A_0\beta_0 A_0=\beta_0$, where $\beta_0=\begin{pmatrix} 1&0\\0&\epsilon\end{pmatrix}$ for odd characteristics, as defined earlier. Then $A_0$ is a orthogonal group given by the bilinear form $\beta_0$. Now if we write $A_0=\begin{pmatrix} a & b\\c &d\end{pmatrix}$, then we get the following equation:
\begin{eqnarray*}
a^2+c^2\epsilon=1\\
ab+cd\epsilon=1\\
b^2+d^2\epsilon=\epsilon
\end{eqnarray*} 
Considering the fact that det$(A_0)=\pm 1$, one more equation $ac-bd=\pm 1$ and this leads to two cases either $a=d$ and $b=c\epsilon$ or $a=-d$ and $b=-c\epsilon$. Recall that, since $\epsilon$ is not a square, $d\neq 0$. Then if $c=0$, then there are four choices for $A_0$ and these are $A_0=\begin{pmatrix}
\pm 1& 0\\ 0  & \pm 1
\end{pmatrix} 
$.

To summarize, in the output of the algorithm $A_0$ will have either of the six forms
\begin{eqnarray}
\begin{pmatrix}
t & -s\epsilon\\
s & t
\end{pmatrix}
&
\text{or}\;\;
\begin{pmatrix}
t & s\epsilon\\
s & -t
\end{pmatrix} & \text{where}\;\; t^2+s^2\epsilon=1\\
\text{or} & \begin{pmatrix}
\pm 1 & 0\\
0 & \pm 1
\end{pmatrix}.
\end{eqnarray}
There are now two ways to describe the algorithm, one is to leave $A_0$ as it is in the output of the algorithm and the other is to include these matrices as generators. For the purpose of uniform exposition we choose the later and included the following two generators
\begin{eqnarray*}
&x_{1}(t, s)=I+(t-1)e_{1,1}-(t+1) e_{-1,-1}+s(e_{-1,1}+\epsilon e_{1,-1}); \;\; t^2+\epsilon s^2=1 \\
&x_{2}=I-2e_{-1, -1}
\end{eqnarray*}
in the list of elementary generators in Table~\ref{table:twisted}. In the case of even characteristics no such reduction is possible and we included the matrix $\begin{pmatrix}
t & p\\
r & s
\end{pmatrix}
$ in the list of generators with the condition that the determinant is 1. 

The elementary matrices for $O^{-}(2l, k)$ depends on characteristics of $k$. We describe them separately in the following table. Let $\alpha$ be an Arf-invariant, $2\leq i,j\leq l$ and $t\in K$, $\xi \in k^{*}$.
\begin{table}[ht]\label{elementary_gens_twist}
\caption{Elementary matrices for $O^{-}(2l, k)$} 
\centering 
\begin{tabular}{| c | c | c c |} 
\hline 
& & &\\
  char$(k)$ & & Elementary matrices& \\ [0.5ex]
 \hline 
 &  $x_{i,j}(t)$&$I+t(e_{i,j}- e_{-j,-i})$, & $i\neq j$\\
both  & $x_{i,-j}(t)$&$I+t(e_{i,-j}- e_{j,-i})$,& $i<j$\\
  & $x_{-i,j}(t)$&$I+t(e_{-i,j}-e_{-j,i})$, & $i<j$\\
  & $w_{i}$&$I-e_{i,i}-e_{-i,-i}+e_{i,-i}+e_{-i,i}$& $2\leq i\leq l$\\
  \hline
 &$x_{i,1}(t)$&$I+t(e_{1,i}-2e_{-i,1})-t^{2}e_{-i,i}$& $2\leq i\leq l$\\
& $x_{1,i}(t)$&$I+t(-e_{1,-i}+2e_{i,1})-t^{2}e_{i,-i}$& $2\leq i\leq l$\\
&$x_{i,-1}(t)$&$I+t(e_{-1,i}-2\epsilon e_{-i,-1})-\epsilon t^{2}e_{-i,i}$& $2\leq i\leq l$\\
odd&$x_{-1,i}(t)$&$I+t(-e_{-1,-i}+2\epsilon e_{i,-1})-\epsilon t^{2}e_{i,-i}$& $2\leq i\leq l$\\
&$x_{1}(t, s)$&$I+(t-1)e_{1,1}-(t+1) e_{-1,-1}+s(e_{-1,1}+\epsilon e_{1,-1})$, & $t^2+\epsilon s^2=1$ \\
&$x_{2}$&$I-2e_{-1, -1}$&\\
\hline
&$x_{1,-i}(t)$&$I+te_{1,-i}+te_{i,-1}+\alpha t^{2}e_{i,-i}$& $2\leq i\leq l$\\
even&$x_{-1,-i}(t)$&$I+te_{-1,-i}+te_{i,1}+\alpha t^{2}e_{i,-i}$& $2\leq i\leq l$\\
&$x_{A_{0}}$&$I+(t-1)e_{1,1}+(s-1)e_{-1,-1}+pe_{1,-1}+re_{-1,1}$, & $ts+pr=1$.\\
\hline
\end{tabular}
\label{table:twisted} 
\end{table}

 Let us note the effect of multiplying $g$ by elementary matrices. 
 Elementary matrices for the twisted orthogonal group in even characteristics differs from that of odd characteristics so in the following table (Table~\ref{Table3} \& ~\ref{Table4}) we made that distinction and listed them separately in different row according to the characteristics of $k$.  
 \begin{table}[h!]
\caption{The row operations for $O^{-}(2l, k)$} \label{Table3}
\vskip 0.2 cm
\centering 
\scalebox{0.95}{
\begin{tabular}{|c c|} 
\hline 
&\\
 &\textbf{Row operations}   \\ [0.5ex]
 \hline 
ER1{\tiny (both)}&$i^{\text{th}} \mapsto i^{\text{th}}+t j^{\text{th}}$ row and and 
 $-j^{\text{th}} \mapsto -j^{\text{th}}-t(-i)^{\text{th}}$  row \\
 \hline
ER2 {\tiny (both)} &$i^{\text{th}} \mapsto i^{\text{th}}+t (-j)^{\text{th}}$ row and $j^{\text{th}} \mapsto j^{\text{th}}-t (-i)^{\text{th}}$  row  \\
  \hline
ER3{\tiny (both)} & $-i^{\text{th}} \mapsto -i^{\text{th}}-tj^{\text{th}}$ row and  $-j^{\text{th}} \mapsto -j^{\text{th}}+ti^{\text{th}}$  row \\
  \hline
ER4{\tiny (odd)} & $1^{\text{st}} \mapsto 1^{\text{st}}-t(-i)^{\text{th}}$ row and  $i^{\text{th}} \mapsto i^{\text{th}}+2t1^{\text{st}}-t^{2}(-i)^{\text{th}}$  row \\
 \hline
ER5{\tiny (odd)} & $1^{\text{st}} \mapsto 1^{\text{st}}+ti^{\text{th}}$ row and  $(-i)^{\text{th}} \mapsto (-i)^{\text{th}}-2t1^{\text{st}}-t^{2}i^{\text{th}}$  row \\
 \hline
ER6{\tiny (odd)} &  $(-1)^{\text{th}} \mapsto (-1)^{\text{th}}-t(-i)^{\text{th}}$ row and 
  $i^{\text{th}} \mapsto i^{\text{th}}+2\epsilon t(-1)^{\text{th}}-\epsilon t^{2}(-i)^{\text{th}}$  row \\
 \hline
ER7{\tiny (odd)} & $(-1)^{\text{th}} \mapsto (-1)^{\text{th}}+ti^{\text{th}}$ row and 
 $(-i)^{\text{th}} \mapsto (-i)^{\text{th}}-2\epsilon t(-1)^{\text{th}}-\epsilon t^{2}i^{\text{th}}$  row\\
\hline 
ER8 {\tiny (even)} & $1^{\text{st}} \mapsto 1^{\text{st}}+t(-i)^{\text{th}}$ row and $i^{\text{th}} \mapsto i^{\text{th}}+t(-1)^{\text{th}}+\alpha t^{2}(-i)^{\text{th}}$  row \\
 \hline
 ER9{\tiny (even)}  & $(-1)^{\text{th}} \mapsto (-1)^{\text{th}}+t(-i)^{\text{th}}$ row and $i^{\text{th}} \mapsto i^{\text{th}}+t1^{\text{st}}+\alpha t^{2}(-i)^{\text{th}}$  row \\
 \hline
$ w_{i}${\tiny(both)} & Interchange $i^{th}$ and $(-i)^{th}$ row \\
 \hline
 \end{tabular}}
\end{table}

\begin{table}[h!]
\caption{The column operations for $O^{-}(2l, k)$} \label{Table4}
\vskip 0.2 cm
\centering 
\scalebox{0.95}{
\begin{tabular}{|c c|} 
\hline 
&\\
&\textbf{Column operations}  \\ [0.5ex]
 \hline 
EC1{\tiny(both)} &$j^{\text{th}} \mapsto j^{\text{th}}+t i^{\text{th}}$ column and 
 $-i^{\text{th}} \mapsto -i^{\text{th}}-t(-j)^{\text{th}}$  column\\
 \hline
EC2{\tiny (both)} & $-i^{\text{th}} \mapsto -i^{\text{th}}-tj^{\text{th}}$ column and 
 $-j^{\text{th}} \mapsto -j^{\text{th}}+ti^{\text{th}}$  column\\
   \hline
 EC3{\tiny (both)} &$j^{\text{th}} \mapsto j^{\text{th}}+t(-i)^{\text{th}}$ column and 
  $i^{\text{th}} \mapsto i^{\text{th}}-t(-j)^{\text{th}}$  column\\
  \hline
 EC4{\tiny (odd)} & $1^{\text{st}} \mapsto 1^{\text{st}}+2ti^{\text{th}}$ column and 
 $(-i)^{\text{th}} \mapsto (-i)^{\text{th}}-t1^{\text{st}}-t^{2}i^{\text{th}}$  column\\
 \hline
  EC5{\tiny (odd)} &$1^{\text{st}} \mapsto 1^{\text{st}}-2t(-i)^{\text{th}}$ column and 
  $i^{\text{th}} \mapsto i^{\text{th}}+t1^{\text{st}}-t^{2}(-i)^{\text{th}}$  column\\
 \hline
  EC6  {\tiny (odd)}  &$(-1)^{\text{th}} \mapsto (-1)^{\text{th}}+(2\epsilon t)i^{\text{th}}$ column and 
$(-i)^{\text{th}} \mapsto (-i)^{\text{th}}-t(-1)^{\text{th}}-\epsilon t^{2}i^{\text{th}}$  column\\
 \hline
  EC7 {\tiny (odd)} & $(-1)^{\text{th}} \mapsto (-1)^{\text{th}}-2\epsilon t(-i)^{\text{th}}$ column and 
  $i^{\text{th}} \mapsto i^{\text{th}}+t(-1)^{\text{th}}-\epsilon t^{2}(-i)^{\text{th}}$  column\\
 \hline 
 EC8 {\tiny (even)} &$(-1)^{\text{th}} \mapsto (-1)^{\text{th}}+ti^{\text{th}}$ column and 
  $(-i)^{\text{th}} \mapsto (-i)^{\text{th}}+t1^{\text{st}}+\alpha t^{2}i^{\text{th}}$  column\\
 \hline
 EC9{\tiny (even)} & $1^{\text{st}} \mapsto 1^{\text{st}}+ti^{\text{th}}$ column and 
    $(-i)^{\text{th}} \mapsto (-i)^{\text{th}}+t(-1)^{\text{th}}+\alpha t^{2}i^{\text{th}}$  column\\
 \hline
$ w_{i}${\tiny (both)} & Interchange $i^{th}$ and $(-i)^{th}$ column \\
 \hline
 \end{tabular}}
\end{table}

Note that any isometry $g$ satisfying the quadratic form $Q$ also  satisfy $\tra{g}\beta g=\beta $.   The main reason the following algorithm works is the closed condition 
$\tra{g}\beta g=\beta $ which gives the  following relations:
\begin{align}
\tra{A_{0}}\beta_{0} A_{0}+\tra{F}E+\tra{E}F&=\beta_{0},\\
\tra{A_{0}}\beta_{0}X+\tra{F}A+\tra{E}C&=0, \\ 
\tra{A_{0}}\beta_{0}Y+\tra{F}B+\tra{E}D&=0,\\
\tra{X}\beta_{0}X+\tra{C}A+\tra{A}C&= 0, \\ \tra{X}\beta_{0}Y+\tra{C}B+\tra{A}D&=I_{l-1}.
\end{align}
\subsection{The Gaussian elimination algorithm for $O^{-}(2l, k)$}
\begin{description}
\item [Step 1] Use ER1 and EC1 to make $A$ into a diagonal matrix but in the process it changes other matrices $A_{0}$, $A, B, C, D, E, F, X$, and $Y$. For sake of notational convenience, we keep calling these changed matrices as $A_{0}$, $A, B, C, D, E, F, X$, and $Y$ as well. 
\item [Step 2] Now there will be two cases depending on the rank $\mathfrak{r}$ of the matrix $A$. The rank of $A$ can be easily determined by the number of non-zero diagonal entries. 
\item [Step 3] Use ER3 and non-zero diagonal entries of $A$ to make corresponding $\mathfrak{r}$ rows of $C$ zero.   
\begin{enumerate}
\item[] If $\mathfrak{r}=l-1$ then $C$ becomes zero matrix. 
\item[] If $\mathfrak{r}<l-1$ then interchange all zero rows of $A$ with corresponding rows of $C$ using $w_{i}$ so that the new $C$ becomes a zero matrix. 
\end{enumerate} 
Once $C$ becomes zero one can note that the relation   
$\tra{X}\beta_{0}X+\tra{C}A+\tra{A}C= 0$ if char$(k)$ is odd or the relation $Q(g(v))=Q(v)$ and the fact that $\alpha t^{2}+t+\alpha$ is irreducible when char$(k)$ is even guarantees that $X$ becomes zero (see Lemma 5.7). Then the relation $\tra{A_{0}}\beta_{0}X+\tra{F}A+\tra{E}C=0$ shows that $F$ becomes zero as well and the relation  $\tra{X}\beta_{0}Y+\tra{C}B+\tra{A}D= I_{l-1}$ guarantees that $A$ has full rank $l-1$ which also makes $D$ a diagonal with full rank $l-1$. Now we diagonalize $A$ again to the form 
$\begin{pmatrix}
1 &0&\cdots&0\\
0&1&\cdots&0\\
\vdots&\vdots&\ddots&\vdots\\
0&0&\cdots&\lambda
\end{pmatrix}$ as in Step 1.
\item [Step 4] Use EC4 and EC6 when char$(k)$ is odd or use EC8 and EC9 when char$(k)$ is even to make $E$ zero.  

Note that the relation $\tra{A_{0}}\beta_{0} A_{0}+\tra{F}E+\tra{E}F=\beta_{0}$ shows that $A_{0}$ is invertible. Thus the relation $\tra{A_{0}}\beta_{0}Y+\tra{F}B+\tra{E}D=0$ guarantees that $Y$ becomes zero.  
\item [Step 5] Use ER2 to make $B$ a zero matrix. Thus the matrix $g$ reduces to:
$g=\begin{pmatrix}A_{0}&0&0\\0&A&0\\0&0&D\end{pmatrix}$, where $A=\diag(1,1,\cdots,1,\lambda) $ and $D=\text{diag}(1,1,\cdots,1,\lambda^{-1})$. Now if char$(k)$ is odd then go to step 6 otherwise go to step 7.
 \item [Step 6] Using the relation $\tra{A_{0}}\beta_{0} A_{0}=\beta_{0}$ it is easy to check that $A_{0}$ has the form $\begin{pmatrix}t&-\epsilon s\\ s& t\end{pmatrix}$ or $\begin{pmatrix}t&\epsilon s\\ s& -t\end{pmatrix}$. If the determinant of $A_{0}$ is $-1$, multiply $g$ by $x_{2}$ to get new $g$ of the above form such that $A_{0}$ has determinant 1. Now using the elementary matrix $x_{1}(t,s)$ we can reduce $g$ to $\begin{pmatrix}I_{2}&0&0\\0&A&0\\0&0&D\end{pmatrix}$, where $A$ and $D$ are as above. 
\item [Step 7] Using elementary matrix $x_{A_{0}}$  we can reduce  $g$ to $\begin{pmatrix}I_{2}&0&0\\0&A&0\\0&0&D\end{pmatrix}$, where $A=\diag(1,1,\cdots,1,\lambda) $ and $D=\text{diag}(1,1,\cdots,1,\lambda^{-1})$. 
\end{description}
\begin{lemma}
Let $k$ be a field of characteristics 2 and let $g=\begin{pmatrix}A_{0}&X&Y\\E&A&B\\F&0&D\end{pmatrix}$, where \,$A=\diag(1,1,\cdots,1,\lambda)$, be an element of $O^{-}(2l, k)$ then $X=0$.
\end{lemma}
\begin{proof}
Let $\{e_{1}, e_{-1}, e_{2}, \cdots, e_{l}, e_{-2},\cdots, e_{-l} \}$ be the standard basis of the vector space $V$. Recall that  for a column vector $x=(x_{1}, x_{-1}, x_{2}, \cdots, x_{l}, x_{-2}, \cdots, x_{-l})^{t}$ the action of the quadratic form $Q$ is given by $Q(x)=\alpha(x_{1}^{2}+x_{-1}^{2}) +x_{1}x_{-1}+\ldots +x_{l}x_{-l}$, where $\alpha t^2 +t+\alpha$ is irreducible over $k[t]$. By definition, for any $g\in O^{-}(2l, k)$ we have $Q(g(x))=Q(x)$ for all $x\in V$. Let $X=\begin{pmatrix} x_{11}\cdots x_{1(l-1)}\\x_{21}\cdots x_{2(l-1)}\end{pmatrix}$ be a $2\times (l-1)$ matrix. Computing $Q(g(e_{i}))=Q(e_{i})$ for all $2\leq i \leq l$, we can see that  $\alpha(x_{1i}^2+x_{2i}^2)+x_{1i}x_{2i}=0$. If $x_{2i}=0$ then we can see that $x_{1i}=0$. Suppose $x_{2i}\neq 0$ for some $i$ then we rewrite the equation by dividing it by $x_{2i}$ as $\alpha(\frac{x_{1i}}{x_{2i}})^2+\frac{x_{1i}}{x_{2i}}+\alpha=0$, which is a contradiction to the fact that $\alpha t^2 +t+\alpha$ is irreducible over $k[t]$. Thus, $x_{2i}=0$ for all $2\leq i\leq l$ and hence $X=0$.
\end{proof}

\subsection{Time-complexity of the above algorithm} We establish that
the worst case time-complexity of the above algorithm is $\mathrm{O}(l^3)$. We mostly count the number of field multiplications.
\begin{itemize}
\item[Step 1] We make $A$ a diagonal matrix by row-column operations. That has complexity $\mathrm{O}(l^3)$.
\item[Step 2] In making both $C$ and $B$ zero-matrix we  multiply two rows by a field
  element and additions. In the worst case, it has to be done $O(l)$
  times and done $O(l^2)$ many times. So the complexity is $\mathrm{O}(l^3)$.
\item[Step 3] In odd-orthogonal group and twisted orthogonal group we clear $X,Y,E,F$, this clearly has complexity $O(l^2)$
\item[] Step 4 has only a few steps that is independent of $l$.
 \end{itemize}
Then clearly, the time-complexity of our algorithm is $\mathrm{O}(l^3)$.
\section{Computing spinor norm for orthogonal groups}\label{spinornorm}

In this section, we show how we can use our Gaussian elimination algorithm to compute the spinor norm for orthogonal groups. Throughout this section we assume that the field $k$ is of odd characteristics. The classical way to define spinor norm is via Clifford algebras~\cite[Chapters 8 \& 9]{gr}. Spinor norm is a group homomorphism $\Theta\colon \text{O}(d,k)\rightarrow k^{\times}/{k^{\times}}^2$, restriction of which to $\text{SO}(d,k)$ gives $\Omega(d,k)$ as kernel. 
However, in practice, it is difficult to use that definition to compute the spinor norm. Wall~\cite{wa}, Zassenhaus~\cite{za} and Hahn~\cite{ha} developed a theory to compute the spinor norm. For our exposition, we follow~\cite[Chapter 11]{ta}. 

Let $g$ be an element of the orthogonal group. Let $\tilde g=I-g$ and $V_g=\tilde g(V)$ and $V^g=ker(\tilde g)$. Using $\beta$ we define Wall's bilinear form $[\ ,\ ]_g$ on $V_g$ as follows:
$$[u,v]_g=\beta(u,y),\ \text{where},\ v=\tilde g(y).$$
This bilinear form satisfies following properties:
\begin{enumerate}
\item $[u,v]_g+[v,u]_g=\beta(u,v)$ and $[u,u]_g=Q(u)$ for all $u,v\in V_g$.
\item $g$ is an isometry on $V_g$ with respect to $[\ ,\ ]_g$.
\item $[v,u]_g=-[u,gv]$ for all $u,v\in V_g$.
\item $[\ ,\ ]_g$ is non-degenerate. 
\end{enumerate}
Then the \textbf{spinor norm} is 
$$\Theta(g)= \overline{\text{disc}(V_g,[\ ,\ ]_g)}\;  \text{if}\; g\neq I$$ 
extended to $I$ by defining $\Theta(I)=\overline 1$.
An element $g$ is called regular if $V_g$ is non-degenerate subspace of $V$ with respect to the form $\beta$. 
Hahn~\cite[Proposition 2.1]{ha} proved that for a regular element $g$ the spinor norm is $\Theta(g)=\overline{\det(\tilde g|_{V_g})\text{disc}(V_g)}$. This gives,
\begin{proposition}\label{propospinor}
\begin{enumerate}
\item For a reflection $\rho_v$, $\Theta(\rho_v)=\overline{Q(v)}$.
\item $\Theta(-1)=\overline{\text{disc}(V,\beta)}$.
\item For a unipotent element $g$ the spinor norm is trivial, i.e., $\Theta(g)=\overline{1}$.
\end{enumerate}
\end{proposition}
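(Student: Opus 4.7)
The plan is to verify each part by unpacking the definition of Wall's form $[\ ,\ ]_g$ on $V_g$ for the appropriate $g$, using Hahn's formula $\Theta(g)=\overline{\det(\tilde g|_{V_g})\,\text{disc}(V_g)}$ whenever $g$ is regular, and falling back on the homomorphism property of $\Theta$ when it is not.

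For part (1), I would first identify $V_{\rho_v}=\langle v\rangle$: since $\rho_v$ negates $v$ and fixes $v^\perp$, the identity $\tilde{\rho_v}(v/2)=v/2-\rho_v(v/2)=v$ shows $v$ lies in the image of $\tilde{\rho_v}$, and dimension count finishes the identification. On this one-dimensional subspace the discriminant of Wall's form is just $[v,v]_{\rho_v}=\beta(v,v/2)=\tfrac{1}{2}\beta(v,v)=Q(v)$, which yields $\Theta(\rho_v)=\overline{Q(v)}$.

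For part (2), the element $-I$ is regular because $\tilde{(-I)}=2I$ is an automorphism of $V$, so $V_{-I}=V$. Hahn's formula then reads $\Theta(-I)=\overline{\det(2I)\cdot\text{disc}(V,\beta)}=\overline{2^d\,\text{disc}(V,\beta)}$, which equals $\overline{\text{disc}(V,\beta)}$ in $k^\times/(k^\times)^2$ after the factor $2^d$ is absorbed into squares using the chosen normalization of the discriminant (in particular when $d$ is even this is immediate, and when $d$ is odd it is forced by the explicit form of $\beta$ in \eqref{beta2}).

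Part (3) is the delicate one: for unipotent $g=I+N$ with $N$ nilpotent, $V_g=N(V)$ is typically a totally isotropic, hence degenerate, subspace of $V$, so Hahn's formula does not apply directly. The plan is to exploit the fact that $\Theta$ is a homomorphism to an abelian group of exponent two. Any unipotent $g\in\text{O}(d,k)$ is a product of the Chevalley root elements $x_\alpha(t)$ listed in Section~\ref{chgenerators}, so it suffices to verify $\Theta(x_\alpha(t))=\overline{1}$ for each root type. I would do this by writing $x_\alpha(t)=\rho_v\rho_w$ with $Q(v)\,Q(w)\in(k^\times)^2$, so that part (1) immediately gives $\Theta(x_\alpha(t))=\overline{Q(v)}\cdot\overline{Q(w)}=\overline{1}$. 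The main obstacle is producing such a two-reflection factorization for each root type, especially for the short-root elements $x_{i,0}(t),x_{0,i}(t)$ occurring only in odd dimension; once these explicit factorizations are in hand, homomorphy of $\Theta$ closes the argument.
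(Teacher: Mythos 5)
The paper itself offers no proof of this proposition: it is stated immediately after Hahn's formula with the words ``This gives,'' and is treated as a known consequence of the properties of Wall's form (the results go back to Wall, Zassenhaus and Hahn; see Taylor, Chapter~11). So the comparison is really between your argument and the literature. Your parts (1) and (2) are sound. The identification $V_{\rho_v}=\langle v\rangle$ and the computation $[v,v]_{\rho_v}=\beta(v,v/2)=Q(v)$ are correct (indeed $[u,u]_g=Q(u)$ is already listed among the stated properties of Wall's form, so (1) is immediate from the definition of $\Theta$ as a discriminant). In (2) the factor $2^d$ really is a matter of which normalization of the discriminant is used: with the Gram determinant of $Q$ (i.e.\ of $\tfrac12\beta$) it cancels exactly, while with $\det\beta$ it leaves a residual $\overline{2}$ when $d$ is odd; flagging this rather than ignoring it is the right instinct, though the phrase ``forced by the explicit form of $\beta$'' explains nothing.

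Part (3) contains the genuine gap, and it is twofold. First, the assertion that every unipotent $g\in\mathrm{O}(d,k)$ is a product of the root elements $x_\alpha(t)$ is not proved; it is essentially the statement that unipotent elements lie in $\Omega(d,k)$, the kernel of $\Theta$ on $\mathrm{SO}(d,k)$, which is uncomfortably close to what you are trying to establish and in any case needs an argument (say, conjugacy into the standard unipotent subgroup plus the conjugation-invariance of $\Theta$). Second, the two-reflection factorizations $x_\alpha(t)=\rho_v\rho_w$ with $Q(v)Q(w)\in{k^{\times}}^{2}$ are never exhibited, as you acknowledge; and since $\Theta$ is well defined, producing such a factorization is exactly as hard as proving $\Theta(x_\alpha(t))=\overline{1}$, so the plan as stated has no content until that computation is done. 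A cleaner route closes both holes at once: the target $k^{\times}/{k^{\times}}^{2}$ has exponent two, so it suffices to show every unipotent element is a square in the group. For the root elements this is immediate from $x_\alpha(s)x_\alpha(t)=x_\alpha(s+t)$, whence $x_\alpha(t)=x_\alpha(t/2)^2$ in characteristic $\neq 2$; for an arbitrary unipotent $g$ in odd positive characteristic $g$ has odd order $p^k$ and $g=\bigl(g^{(p^k+1)/2}\bigr)^2$, while in characteristic zero $g=\exp(N)=\exp(N/2)^2$ with $\exp(N/2)$ again orthogonal. Note also that only the root-element case is actually used later, in the lemma computing spinor norms of the elementary matrices and in Corollary~B.
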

\noindent Murray and Roney-Dougal~\cite{mr} used the formula of Hahn to compute spinor norm. 
However, we show (Corollary B) that the Gaussian elimination algorithm we developed in Section~\ref{wordproblem} ouptuts the spinor norm.
 We compute the spinor norm for elements in twisted orthogonal group separately. First we observe the following:
\begin{lemma}\label{spinornorm1}
For the group $\text{O}^{+}(d,k)$, $d\geq 4$,
 \begin{enumerate}
  \item $\Theta(x_{i,j}(t))=\Theta(x_{-i,j}(t))=\Theta(x_{i,-j}(t))=\overline{1}$. Furthermore, in odd case we also have $\Theta(x_{i,0}(t))=\overline 1=\Theta(x_{0,i}(t))$.
  \item $\Theta(w_l)=\overline{1}$.
  \item $\Theta(\diag(1,\ldots,1,\lambda,1,\ldots,1,\lambda^{-1}))=\overline{\lambda}$.
 \end{enumerate}
\end{lemma}
\begin{proof}
We use Proposition~\ref{propospinor}. The first claim follows from the
fact that all elementary matrices are unipotent. 
 The element $w_l=\rho_{(e_l+e_{-l})}$ is a reflection thus $\Theta(w_l)=\overline{Q(e_l+e_{-l})}=\overline{1}$.
 
For the third part we note that $\diag(1,\ldots,1,\lambda,1,\ldots,1,\lambda^{-1}) = \rho_{(e_l+e_{-l})} \rho_{(e_l+\lambda e_{-l})}$ and hence the spinor norm $\Theta(\diag(1,\ldots,1,\lambda,1,\ldots,1,\lambda^{-1}) )=\Theta(\rho_{(e_l+\lambda e_{-l})}) =\overline{Q(e_l+\lambda e_{-l})}=\overline{\lambda}$.
\end{proof}

\begin{lemma} The spinor norm of elementary matrices in $O^{-}(d, K)$ are:
 \begin{enumerate}
  \item $\Theta(x_{i,j}(t)) = \Theta(x_{-i, j}(t)) = \Theta(x_{i,-j}(t)) = \bar{1}.$
  \item $\Theta(w_{i}) = \bar{1}.$
  \item $ \Theta(\text{diag}(1,1,1,...,1,\lambda,1,...,1,\lambda^{-1})) = \bar{\lambda},\\
  \Theta(\text{diag}(1,-1,1,...,1,\lambda,1,...,1,\lambda^{-1})) = \overline{\epsilon\lambda},\\
  \Theta(\text{diag}(-1,1,1,...,1,\lambda,1,...,1,\lambda^{-1})) = \bar{\lambda},\\
  \Theta(\text{diag}(-1,-1,1,...,1,\lambda,1,...,1,\lambda^{-1})) = \overline{\epsilon\lambda}$.
  \item $ \Theta(x_{1}(t,s))=\overline{2(1-t)}$ whenever $t\neq 1$.
  \item $\Theta(x_{2})=\overline{\epsilon}$
 \end{enumerate}
 \end{lemma}
 Proof: The first one follows from previous proposition as all elementary matrices are unipotent. 
 The element $w_{i} = \rho_{e_{i}+e_{-i}}$ is a reflection thus 
 $\Theta(w_{i})= \overline{Q(e_{i} +e_{-i})} = \bar{1}$.\\ For the third part we note that
 $\text{diag}(1,1,1,...,1,\lambda,1,...,1,\lambda^{-1}) = \rho_{e_{l}+e_{-l}}\rho_{e_{l}+\lambda e_{-l}}$
and hence,
\begin{align*}
\Theta(\text{diag}(1,1,1,...,1,\lambda,1,...,1,\lambda^{-1}))& = \Theta(\rho_{e_{l}+e_{-l}})\Theta(\rho_{e_{l}+\lambda e_{-l}}) \\
\Theta(\text{diag}(1,1,1,...,1,\lambda,1,...,1,\lambda^{-1}))&=\overline{Q(e_{l}+\lambda e_{-l})} \\
\Theta(\text{diag}(1,1,1,...,1,\lambda,1,...,1,\lambda^{-1}))&= \bar{\lambda}.
\end{align*}
Observe that \linespread{0.5}
\begin{align*}
\text{diag}(1,-1,1,...,1,\lambda,1,...,1,\lambda^{-1})&=\text{diag}(1,-1,1,...,1,\lambda,1,...,1,\lambda^{-1})\rho_{e_{-1}} \\
 \text{diag}(-1,1,1,...,1,\lambda,1,...,1,\lambda^{-1})&=\text{diag}(1,-1,1,...,1,\lambda,1,...,1,\lambda^{-1})\rho_{e_{1}}
\end{align*}
implies that 
\begin{align*}
\Theta(\text{diag}(1,-1,1,...,1,\lambda,1,...,1,\lambda^{-1}))&=\overline{\epsilon\lambda},\\
\Theta(\text{diag}(-1,1,1,...,1,\lambda,1,...,1,\lambda^{-1}))&=\overline{\lambda}.
\end{align*} 
Similarly we can see that
\begin{align*}
\text{diag}(-1,-1,1,...,1,\lambda,1,...,1,\lambda^{-1}))&=\text{diag}(1,-1,1,...,1,\lambda,1,...,1,\lambda^{-1})\rho_{e_{1}}
\end{align*} 
implies that  $\Theta(\text{diag}(-1,-1,1,...,1,\lambda,1,...,1,\lambda^{-1})) = \overline{\epsilon\lambda}$.\\
For the fourth part we observe that 
$x_{1}(t,s)=\rho_{(t-1)e_{1}+se_{-1}}$ and hence
\begin{align*}
\Theta(x_{1}(t, s))&=\overline{Q(\rho_{(t-1)e_{1}+se_{-1}})}\\
&=(t-1)^2+\epsilon s^2 \\
&=2(1-t)  \quad \quad \text{as} \quad t^2+\epsilon s^2=1. 
\end{align*}
Note that $x_{2}=\rho_{e_{-1}}$ and $\Theta(\rho_{e_{-1}})=\bar{\epsilon} $ implies that $\Theta(x_{2})=\overline{\epsilon}$. 
\begin{proof}[Proof of Corollary B]
 Let $g\in\text{O}^+(d,k)$. From Theorem A, we write $g$ as a product of
 elementary matrices and a diagonal matrix $\diag(1,\ldots,1,\lambda,1,\ldots,1,\lambda^{-1})$ and hence we can find the spinor norm of $g$.
\end{proof}
\subsection{Computing the spinor norm in $O^{-}(d,k)$}\label{section:twisted_sn}
Before moving to Step 6 of the algorithm, look at $A_0$. If det$(A_0)=1$, it is of the form 
$\begin{pmatrix}
t & -s\epsilon\\
s & t
\end{pmatrix} 
$
where $s\neq 0$. Furthermore, if $s=0$ then it is of the form
$\begin{pmatrix}
1&0\\
0&1
\end{pmatrix}
$ or $\begin{pmatrix}
-1&0\\
0&-1
\end{pmatrix}$ the spinor norm then can be computed from the lemma above. If the determinant is $-1$, then $A_0$ is $x_2$ times one of the previous matrices. So the spinor norm can be computed because it is multiplicative.
\section{Double coset decomposition for Siegel maximal parabolic}\label{parabolicdecomp}

In this section, we compute the double coset decomposition with respect to Siegel maximal parabolic subgroup using our algorithm. Let $P$ be the Siegel maximal parabolic of $G$ where $G$ is either split orthogonal group $\text{O}(d,k)$ or $\text{Sp}(2l,k)$, where char(k) is odd. In Lie theory, a parabolic is obtained by fixing a subset of simple roots~\cite[Section 8.3]{ca}. Siegel maximal parabolic corresponds to the subset consisting of all but the last simple root. 
Geometrically, a parabolic subgroup is obtained as fixed subgroup of a totally isotropic flag~\cite[Proposition 12.13]{mt}. The Siegel maximal parabolic is the fixed subgroup of  following isotropic flag (with the basis in Section~\ref{des2}): $$\{0\}\subset \{e_1,\ldots,e_l\}\subset V.$$
Thus $P$ is of the form
         $\begin{pmatrix} \alpha&0&Y\\ E&A&B\\F&0&D\end{pmatrix}$ in  $\text{O}(2l+1,k)$ 
         and $\begin{pmatrix} A&B\\0&D\end{pmatrix}$ in
         $\text{Sp}(2l,k)$ and $\text{O}(2l,k)$.

The problem is to get the double coset decomposition $P\backslash G/ P$. That is, we want to write $G=\underset{\omega\in\widehat{W}}{\bigsqcup} P\omega P $ as disjoint union where $\widehat{W}$ is
a finite subset of $G$. Equivalently, given $g\in G$ we need an
algorithm to determine the unique $\omega\in \widehat{W}$ such that $g\in
P\omega P$. 
If $G$ is connected with Weyl group $W$ and suppose $W_P$ is the Weyl group corresponding to $P$ then~\cite[Proposition 2.8.1]{ca2}  
$$ P\backslash G/P \longleftrightarrow W_P\backslash W/W_P.$$
We need a slight variation of this as the orthogonal group is not connected. We define $\widehat{W}$ as follows:

\[\widehat{W}=\left\{\omega_0=I, \omega_i=w_{1}\cdots w_{i}\mid 1\leq i\leq l\right\}\]
where $w_i$ were defined earlier for each class of groups.

\begin{theorem}\label{thmbruhatdecomposition}
Let $P$ be the Siegel maximal parabolic subgroup in $G$, where $G$ is either $\text{O}(d,k)$ or
$\text{Sp}(d,k)$.  Let $g\in G$. Then there is an efficient algorithm
to determine $\omega$ such that $g\in P\omega P$. Furthermore, $\widehat
W$ the set of all $\omega$ is a finite set of $l+1$ elements where $d=2l$ or $2l+1$.
\end{theorem}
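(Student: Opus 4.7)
The plan is to identify a complete invariant for the double coset $PgP$ and show it takes exactly $l+1$ values, each realised by one of the $\omega_i$. Let $W_+=\langle e_1,\ldots,e_l\rangle$, the totally isotropic subspace stabilised by $P$. For any $p_1,p_2\in P$ we have $p_1 g p_2(W_+)=p_1 g(W_+)$, and then $p_1 g(W_+)\cap W_+=p_1(gW_+\cap W_+)$, so $\dim(gW_+\cap W_+)$ is constant on $PgP$. Translated into block form, this invariant is $l-\text{rank}(C)$ when $G=\text{Sp}(2l,k)$ or $\text{O}(2l,k)$, and $l-\text{rank}\begin{pmatrix}X\\ C\end{pmatrix}$ when $G=\text{O}(2l+1,k)$; write $\rho(g)$ for the relevant rank. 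Using Lemma~\ref{lemmaD}, each $w_{j,-j}$ swaps $e_j\leftrightarrow\pm e_{-j}$ while fixing $e_k$ for $k\notin\{j,-j\}$, so $\omega_i=w_{1,-1}\cdots w_{i,-i}$ fixes $\langle e_{i+1},\ldots,e_l\rangle$ pointwise and sends each of $e_1,\ldots,e_i$ into $W_-$; hence $\rho(\omega_i)=i$ and the $\omega_i$ lie in pairwise distinct double cosets.

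Next I would show that every $g\in G$ lies in $P\omega_{\rho(g)}P$. Using only $P$-operations (ER1, EC1, ER2, EC2, plus the E4a variant in the odd case, all block upper-triangular and hence in $P$), I would first apply ER1 and EC1 to reduce the block controlling $\rho(g)$ to the same rank-normal form as in $\omega_{\rho(g)}$: the action on $C$ is $C\mapsto\tra{R_1}^{-1}CR_2$ with $R_1,R_2$ ranging freely over $GL(l,k)$, so classical Gaussian elimination on $C$ suffices. Then the defining relation $\tra g\beta g=\beta$ combined with Lemmas~\ref{lemmaA}--\ref{lemmaG} pins the remaining blocks down to a highly constrained shape, which ER2, EC2 (and in the odd case the appropriate ER4) clear out to match $\omega_{\rho(g)}$ exactly. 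The algorithm is then simply to compute $\rho(g)$ and output $\omega_{\rho(g)}$, running in $\mathrm{O}(l^3)$ time.

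The main obstacle is this last step: since ER3 and EC3 are not in $P$, we lose the most direct route used in Section~\ref{wordproblem}, and we must verify that the remaining $P$-operations really do reduce $g$ all the way to $\omega_{\rho(g)}$---especially in the odd case, where the induced action of $P\times P$ on $\begin{pmatrix}X\\ C\end{pmatrix}$ only uses block upper-triangular matrices on the left. If this reduction turns out to be too delicate to carry out by hand, we can instead invoke the identification $P\backslash G/P\leftrightarrow W_P\backslash W/W_P$ already recalled in the text (suitably modified to accommodate the disconnected orthogonal group) to deduce exhaustion directly, after which the theorem reduces to the invariant computation above.
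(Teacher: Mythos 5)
Your route is essentially the paper's: eliminate using only the elementary matrices that lie in $P$ (E1, E2, and E4a in the odd case), read the rank $m$ of $C$ (resp.\ of $\begin{pmatrix}X\\ C\end{pmatrix}$) off the reduced form, and exhibit $g\in P\omega_m P$ via the interchanges $w_{1,-1}\cdots w_{m,-m}$. Two points of comparison. First, the ``main obstacle'' you flag is not there, because you are aiming at too strong a target: you do not need to reduce $g$ all the way to $\omega_{\rho(g)}$, only to arrange that $\omega_m p_3 p_1 g p_2\in P$, i.e.\ that the sub-diagonal block vanishes after the swap. Concretely, once ER1/EC1 have made $C$ diagonal with nonzero entries $d_1,\dots,d_m$, the relation $\tra g\beta g=\beta$ forces $\tra AC$ to be symmetric (symplectic case) or skew-symmetric (orthogonal case), hence $A_{ij}=0$ for $i\le m<j$; the remaining entries $A_{ij}$ with $i,j\le m$ are cleared by a single ER2 with $R_{ij}=-A_{ij}/d_j$, and the same relation guarantees this $R$ has the symmetry type required of an E2 matrix. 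Multiplying by $\omega_m$ then lands in $P$. So ER3/EC3 are never needed, and no delicate normal-form analysis of the other blocks is required. Second, your invariant $\dim(gW_+\cap W_+)$ is a genuine addition rather than a detour: the paper's proof only produces \emph{some} $\omega_m$ with $g\in P\omega_m P$ and leans on the quoted bijection $P\backslash G/P\leftrightarrow W_P\backslash W/W_P$ (which, as the paper itself notes, needs modification for the disconnected orthogonal group) for uniqueness, whereas your computation shows directly that $\rho$ is constant on double cosets and separates the $\omega_i$, giving disjointness uniformly in all three families. The best version of the argument is therefore your invariant computation combined with the weaker reduction target above; your proposed fallback is then unnecessary.
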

\begin{proof}
 In this proof we proceed with a similar but slightly different Gaussian
 elimination algorithm. Recall that
 $g=\begin{pmatrix}A&B\\C&D\end{pmatrix}$ whenever $g$ belongs to
 $\text{Sp}(2l,k)$ or $\text{O}(2l,k)$ or
 $g=\begin{pmatrix}\alpha&X&Y\\ E&A&B\\F&C&D\end{pmatrix}$ whenever
 $g$ belongs to $\text{O}(2l+1,k)$. In our algorithm, we made $A$ into a diagonal
 matrix. Instead of that, we can use elementary matrices ER1 and EC1 to
 make $C$ into a diagonal matrix and then do the row interchange to
 make $A$ into a diagonal matrix and $C$ a zero matrix. If we do that,
 we note that elementary matrices E1 and E2 are in
 $P$. The proof is just keeping track of elements of
 $P$ in this Gaussian elimination algorithm. The step 1 in the
 algorithm says that there are elements $p_1, p_2\in P$
 such that $p_1gp_2 = \begin{pmatrix} A_1&B_1\\C_1&D_1\end{pmatrix}$
 where $C_1$ is a diagonal matrix with $m$ non-zero entries. Clearly
 $m=0$ if and only if $g\in P$. In that case $g$ is in the double
 coset $P\omega_0P=P$. Now suppose $m\geq 1$. Then in Step 2 we
 multiply by E2 to make the first $m$ rows of $A_1$ zero, i.e., there is a $p_3\in P$ such that $p_3p_1gp_2 = \begin{pmatrix} \tilde A_1 & \tilde B_1\\C_1&D_1\end{pmatrix}$ where first $m$ rows of $\tilde A_1$ are zero. After this we interchange rows $i$ with $-i$ for $1\leq i\leq m$ which makes $C_1$ zero, i.e., multiplying by $\omega_m$ we get  $\omega_m p_3p_1gp_2=\begin{pmatrix} A_2&B_2\\0&D_2\end{pmatrix}\in P$. Thus $g\in P\omega_m P$.
 
 For $\text{O}(2l+1,k)$ we note that the elementary matrices E1, E2
 and E4a are in $P$. Rest of the proof is similar to the earlier case
 and follows by carefully keeping track of elementary matrices used in
 our algorithm in Section~\ref{wordproblem}. 
\end{proof}
\section{Conclusions}

Now some implementation results, we implemented our algorithm in magma~\cite{magma}. We found
our implementation to be fast and stable. In magma, Costi and C.~Schneider
installed a function \emph{ClassicalRewriteNatural}. It is the
row-column operation developed by Costi~\cite{costi} in natural
representation. We tested the time
taken by our algorithm and the one taken by the Magma function. To do
this test, we followed Costi~\cite[Table 6.1]{costi} as closely as
possible. Two kind of simulations were done. In one case, we
fixed the size of the field at $7^{10}$ and varied the size of the
matrix from $20$ to $60$. To time both these algorithms for any
particular input, we took one thousand random samples from the group and run the
algorithm for each one of them. Then the final time was the average of
this one thousand random repetitions. The times were tabulated and
presented below.

In the other case, we kept the size of the matrix fixed at $20$ and we
varied the size of the field, keeping the characteristic fixed at $7$. In many cases the magma computation for
the function ClassicalRewriteNatural will not stop in a reasonable amount
of time or will give an error and not finish computing. In those cases,
though our algorithm worked perfectly, we were unable to get adequate
data to plot and are represented by gaps in the graph drawn. Here also the times are the
average of one thousand random repetitions.  
\begin{figure}[h]
\centering
\includegraphics[scale=0.40]{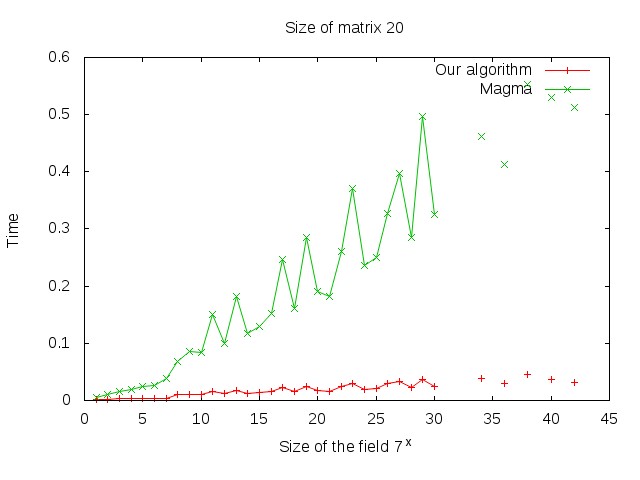}
\includegraphics[scale=0.40]{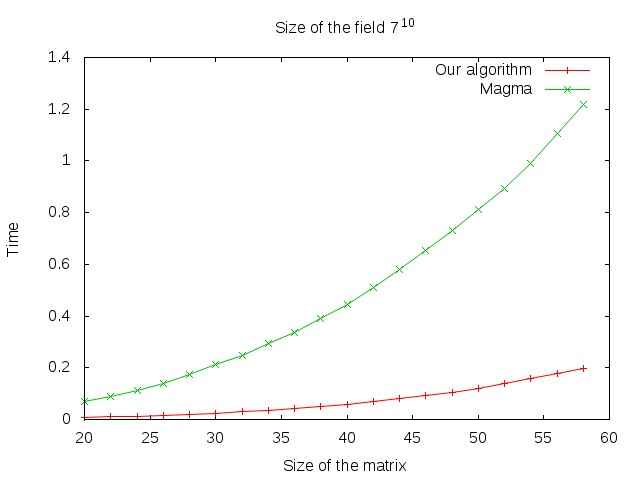}
\caption{Some simulations comparing our algorithm with the one inbuilt in Magma for even-order orthogonal groups}
\end{figure}
\begin{figure}[h]
\centering
\includegraphics[scale=0.45]{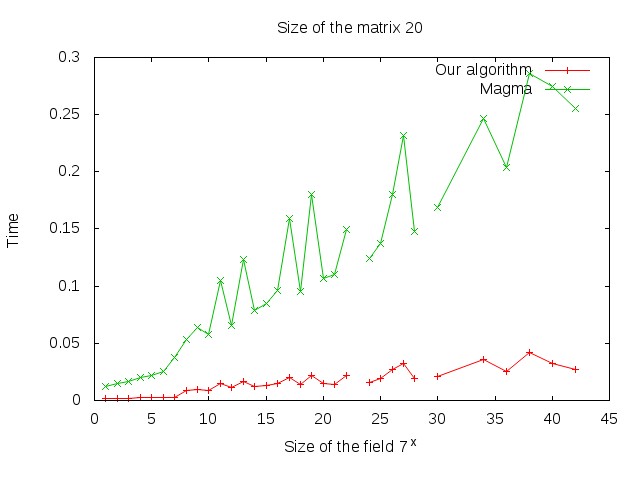}
\includegraphics[scale=0.45]{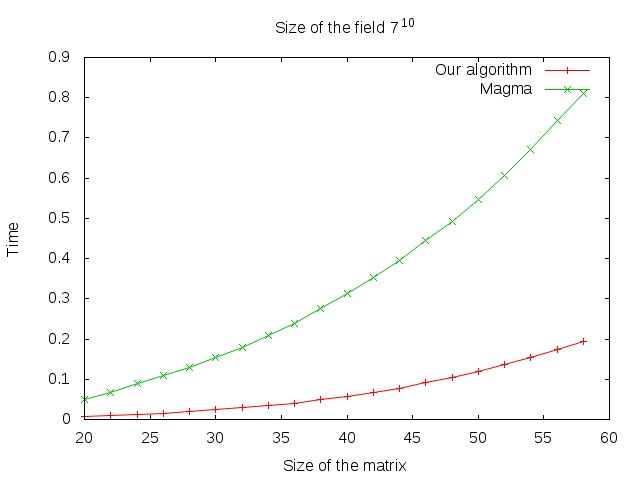}
\caption{Some simulations comparing our algorithm with the one inbuilt in Magma for symplectic groups}
\end{figure}
It seems that our algorithms perform better than that of Costi's on
all fronts.

\bibliographystyle{amsplain}
\bibliography{GCC_algorithm}
\end{document}